  \def\p@subfigure{\thefigure\,}
\newtheorem{theorem}{Theorem}
\newtheorem{thm}{Theorem}
\newtheorem{cor}[theorem]{Corollary}
\newtheorem{dfn}[theorem]{Definition}
\newtheorem{lem}[theorem]{Lemma}
\newtheorem{prp}[theorem]{Proposition}
\theoremstyle{remark}
\numberwithin{equation}{section}
\numberwithin{theorem}{section}
\newcommand{\OB}{Ob{\l}{\'o}j} 
\newcommand{\F}{\mathcal{F}}
\newcommand{\E}{\mathbb{E}}
\newcommand{\N}{\mathbb{N}}
\newcommand{\R}{\mathbb{R}}
\newcommand{\G}{\mathcal G}
\newcommand{\seq}{\subseteq}
\renewcommand{\phi}{\varphi}
\renewcommand{\P}{P}
\newcommand{\sep}{\textrm{(SEP$_{\lambda, \mu}$)}}
\newcommand{\optsep}{\textrm{(OptSEP$_{\lambda, \mu}$)}}
\newcommand{\SG}{\textsf{SG}_{\gamma}}
\begin{document}
\title[Perkins Embedding for General Starting Laws]{Perkins Embedding for General Starting Laws}
\author{Annemarie Grass}\thanks{We acknowledge support by the Austrian Science Fund FWF through projects Y0782 and P35197. }
\thanks{Email: annemarie.grass@univie.ac.at}
\date{\today}
\begin{abstract}
The \emph{Skorokhod embedding problem} (SEP) is to represent a given probability measure as a Brownian motion $B$ at a particular stopping time. 
In recent years particular attention has gone to solutions which exhibit additional optimality properties due to applications to martingale inequalities and robust pricing in mathematical finance. 

Among these solutions, the \emph{Perkins embedding} sticks out through its distinct geometric properties. 
Moreover is the only optimal solution to the SEP which so far has been limited to the case of Brownian motion started in a dirac distribution. 

In this paper we provide for the first time an optimal solution to the Skorokhod embedding problem for the general SEP which leads to the Perkins solution when applied to  Brownian motion with start in a dirac. 
This solution to the SEP also suggests a new geometric interpretation of the Perkins solution which  better clarifies the relation to other optimal solutions of the SEP.
\end{abstract}
\maketitle
%
%
%
%
%
%
%
%
%
%
\section{Introduction}
Let $\mu$ be a probability measure on the real line with finite second moment 
and write $(B_t)_{t \geq 0}$ for a Brownian motion started according to a probability measure $\lambda$, i.e.\ $B_0 \sim \lambda$.  
The Skorokhod embedding problem (\cite{Sk61, Sk65}) is to find a stopping time $\tau$ such that the measure $\mu$ can be represented in the sense that
\begin{equation}
	B_{\tau} \sim \mu, \quad \text{ and } \E[\tau]<\infty.	\tag{SEP$_{\lambda,\mu}$}
\end{equation}
Skorokhod \cite{Sk61, Sk65} gave a solution in the case where Brownian motion is started in an atom.  A  necessary and sufficient condition for the SEP$_{\lambda, \mu}$ to admit a solution is for $\lambda$ to be prior to $\mu$ in convex order, i.e.\ 
$\int f(x) \lambda(dx) \leq \int f(x) \mu(dx)$ for any convex function $f: \R \rightarrow \R$. This follows by combining Skorokhod's results with Strassen's theorem   \cite{St65} on the existence of martingales with given marginals. 
We will tacitly assume this condition throughout the paper. The condition $\E[\tau]<\infty$ is imposed to exclude trivial solutions. 

Skorokhod's work initiated an active field of research. \OB's survey  \cite{Ob04} provides a comprehensive overview of the developments up to 2004 and describes more than 20 different solutions to the Skorokhod problem given by different authors.

During the 2000's a particular stimulous for the field has come from the connection with robust finance which was discovered in Hobson's seminal  paper \cite{Ho98a}, see also \cite{Ho11}. 
In view of these applications, it is of particular importance to construct stopping times which optimize certain  functionals subject to satisfying the embedding constraint. 
This question is known as the \emph{optimal Skorokhod embedding problem} and has been considered extensively, see \cite{ CoHo07, CoWa12, CoWa13, GaMiOb14, ObDoGa14, HeObSpTo16, ObSp16, BeCoHu14, BeCoHu16, GhKiPa19, CoObTo19, BaBe20, GhKiLi20a, BaZh21, GaObZo21, BeNuSt19} among others.
%
%
%
%
%
%
\subsection*{Perkins Embedding}
In this article we focus on a particular solution to \sep\, established by Perkins \cite{Pe86} for the \emph{deterministic start case} where $B_0 = 0$, i.e.\ $\lambda = \delta_0$.  The Perkins embedding solves an optimal Skorokhod embedding problem:
It has the characteristic optimality property of minimizing the law of the running maximum $\overline B_{\tau} := \max_{t \leq \tau}B_t$ of the underlying Brownian motion while simultaneously maximizing the law of the running minimum $\underline B_{\tau} := \min_{t \leq \tau}B_t$ among all solution to \sep. 
Here (and below) maximization / minimization of laws is understood with respect to first order stochastic dominance.  

The Perkins embedding is of importance for  robust pricing of barrier and lookback options, see \cite{BrHoRo01, Ho98a, Ho11}. 
While other solutions to the optimal Skorokhod embedding problem have  been  given in  the case of a general starting distribution $B_0\sim \lambda$ or admit direct  extensions to this  case, there is no known solution in the general starting case which extends the Perkins embedding.  
\begin{figure}
\centering
\begin{subfigure}{.45\linewidth}
\centering
\includegraphics[width = 1\linewidth]{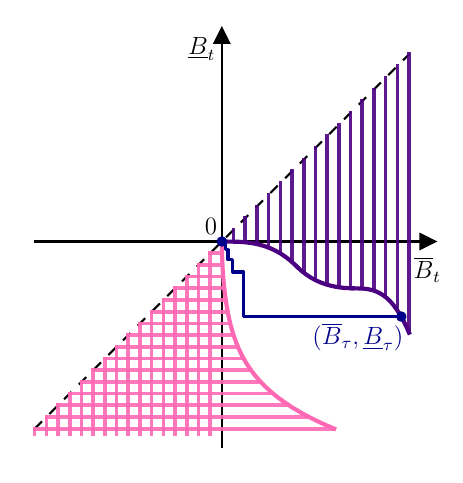}
\caption{Perkins solution in the $\lambda = \delta_0$ case.}
\label{fig:PerkinsTrivial}
\end{subfigure}
\quad
\begin{subfigure}{.45\linewidth}
\centering
\includegraphics[width = 1\linewidth]{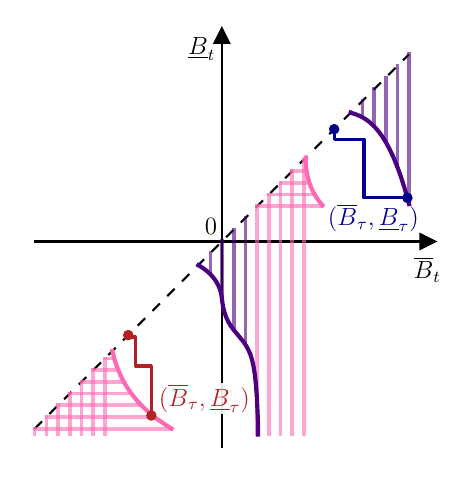}
\caption{Perkins solution with general starting.}
\label{fig:PerkinsRandom}
\end{subfigure}
\caption{Examples of Perkins solutions to \sep.}
\label{fig:Perkins}
\end{figure}

In \cite{HoPe02} Hobson and Pedersen propose a solution to \sep\, allowing for a general starting law which minimizes the law of the running maximum. Remarkably, the Hobson-Pederson embedding bears some resemblance to the Az\'ema-Yor \cite{AzYo79} embedding. As pointed out in \cite{HoPe02}, the Hobson-Pederson embedding does not maximize the running minimum in the $B_0=0$ case and specifically differs from the Perkins embedding also in this case. A further difference between the Hobson-Pederson embedding and the Perkins embedding, is that the latter does not require external randomization (unless $\mu(0)>0$, 
see Section \ref{s:Perkins-RandomStarting}).

Of specific interest is the geometric structure of Perkins solution. 
In the $\lambda = \delta_0$ case it can be identified as a hitting time of the process $(\overline B, \underline B)$ 
of a specifically structured target set $R \subseteq \R^2$, see Figure \ref{fig:Perkins} (A). 
The set $R$ is a union of `lines' of two types: 
\begin{itemize}
\item[(v)] \emph{vertical lines} which start below the diagonal and terminate in the diagonal. These lines will be depicted in `\emph{v}iolett'. 
\item[(h)] \emph{lines} which start on the right of the diagonal. They move horizontally until being reflected by the diagonal and continue to move vertically towards $-\infty$. These lines will be depicted in `\emph{h}ot pink'.
\end{itemize}
We will say that $R$ has \emph{vh-barrier structure}. 
Note that traditionally the Perkins picture is drawn without the horizontal lines being reflected downwards. 
These downward lines are irrelevant  in the deterministic starting case however become crucial when allowing for general starting as we will see below.

Our main contribution is to extend the Perkins solution to the case of general starting law. 
Specifically, setting 
\(
    (\lambda \wedge \mu) (A) := \inf_{\scaleto{B \seq A,\, B \text{ measurable}}{4pt}} \big (\lambda(B) + \mu(A \setminus B) \big) 
\) we have:
\begin{thm} 
\label{thm:maintheorem}
There exists a  vh-barrier  $R \seq \R^2$ such that the stopping time $\tau_{P}$ defined by 
\(P[\tau_{\text{P}} = 0, B_0 \in A]  = (\lambda \wedge \mu) (A)\) for Borel $A\subseteq \R$ and on $\{ \tau_{P} > 0 \}$ by 
\begin{equation}\label{eq:PerkinsTau}
    \tau_{P}= \inf \left\{t \geq 0 : \left(\overline B_{t}, \underline B_{t}\right) \in R \right\}
\end{equation}
is a solution to \sep. Moreover $\tau_{P}$  minimizes the law of the running maximum $\overline B$ and further maximizes  the law of the running minimum 
$\underline B$  among all  these solutions.

If $\lambda$ and $\mu$ are mutually singular measures we have $\tau_{P} > 0$ almost surely 
and $\tau_{P}$ is adapted to the filtration generated by the underlying Brownian motion.

The solution $\tau_{P}$ will be unique in the sense that if $\tilde \tau$ is another stopping time of the form \eqref{eq:PerkinsTau} then we have $\tau_{P} = \tilde \tau$ almost surely. 
\end{thm}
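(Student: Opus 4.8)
The plan is to follow the optimal-transport approach to the Skorokhod embedding problem: phrase \optsep\ as a linear minimization over randomized stopping times on Wiener space, extract a minimizer by compactness, and read off its geometry from the \emph{monotonicity principle} of Beiglb\"ock--Cox--Huesmann \cite{BeCoHu14, BeCoHu16}, which I use freely. I would work on $\R_+\times C(\R_+)$ with a randomized stopping time represented as a probability measure $\xi$ whose $C(\R_+)$-marginal is the Wiener measure $\W_\lambda$ started from $\lambda$; the collection $\mathsf{RST}(\lambda,\mu)$ of those $\xi$ that embed $\mu$ and satisfy $\E_\xi[\tau]<\infty$ is nonempty because $\lambda$ precedes $\mu$ in convex order, and is weakly compact (using the finite second moment of $\mu$ to control $\E_\xi[\tau]$), and on this set all the functionals below are weakly continuous.

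\emph{Step 1 (objective and existence).} Fix strictly increasing bounded smooth functions $\varphi,\psi:\R\to\R$. First minimize $\xi\mapsto\E_\xi[\varphi(\overline B_\tau)]$ over $\mathsf{RST}(\lambda,\mu)$; the set $\mathcal O_1$ of minimizers is nonempty, convex and weakly compact. Then minimize $\xi\mapsto\E_\xi[-\psi(\underline B_\tau)]$ over $\mathcal O_1$, call a minimizer $\xi_P$ and let $\tau_P$ be an associated stopping time. This lexicographic secondary optimization is the device that will ultimately force \emph{simultaneous} optimality of both running extrema. In parallel I run the mirror construction --- the same recipe applied to $-B$ started from $\check\lambda$ with target $\check\mu$ --- producing $\tau_P'$ which optimizes the two extrema in the opposite order.

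\emph{Step 2 (the vh-barrier).} The monotonicity principle provides a Borel set $\Gamma$ carrying $\xi_P$ with no ``stop-go pair'' for the lexicographic objective: if a stopped and a running path meet at the same spatial level $x$ with running-maximum/minimum pairs $(\overline b_1,\underline b_1)$ and $(\overline b_2,\underline b_2)$ satisfying $\overline b_1\le\overline b_2$ and $\underline b_1\ge\underline b_2$, then letting the first path run and stopping the second weakly improves the running maximum and, in case of equality, strictly improves the running minimum, contradicting optimality. Since the two-dimensional process $(\overline B_t,\underline B_t)$ evolves by holding one coordinate fixed while the other moves monotonically, with the moving direction changing when the moving coordinate meets the diagonal $\{\overline b=\underline b\}$, this exclusion forces the closed hull of the stopping region to be a union of exactly the two admissible line types of the introduction --- vertical segments terminating on the diagonal, and lines running horizontally, reflected at the diagonal, then descending --- i.e.\ a vh-barrier $R$. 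One then checks that off the immediate-stop set $\tau_P$ coincides with the hitting time \eqref{eq:PerkinsTau} of $R$, and that the immediate-stop mass must be exactly $(\lambda\wedge\mu)$, since leaving precisely the common mass of $\lambda$ and $\mu$ untouched is feasible and optimal, whereas stopping any more or less would violate convex order on the diagonal or enlarge the running maximum. I expect this step --- the correct notion of stop-go pair for a reflected two-dimensional state process, the exclusion of null-set pathologies so that a genuine closed barrier governs the support, and the identification of the immediate-stop mass --- to be the main obstacle.

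\emph{Step 3 (optimality for all solutions, singular case, uniqueness).} With $R$ in hand I would upgrade the one-functional optimality to full stochastic dominance by a comparison argument in the spirit of the Root/Rost barrier analysis and of Perkins' original work: the vh-barrier structure forces $\overline B_{\tau_P}$ to be, in stochastic order, the smallest running maximum attainable by any solution of \sep, and symmetrically $\underline B_{\tau_P'}$ the largest running minimum. To see that a single stopping time realizes both, I invoke the uniqueness claim: $\tau_P$ and $\tau_P'$ are vh-barrier solutions of \sep\ for the same data, and a Loynes-type argument --- comparing $\tau_P\wedge\tilde\tau$ and using the monotone nesting of barrier hitting times together with the fact that both embed $\mu$ --- shows that any two vh-barrier stopping times embedding $\mu$ from $\lambda$ have almost surely equal hitting times; hence $\tau_P=\tau_P'$ is optimal for both extrema at once, and uniqueness is precisely this argument. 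Finally, if $\lambda$ and $\mu$ are mutually singular then $(\lambda\wedge\mu)\equiv0$, so $\tau_P>0$ almost surely; and since $R$ may be taken closed, \eqref{eq:PerkinsTau} is a hitting time of a closed set by a continuous process, hence a stopping time of the Brownian filtration, so no external randomization is required.
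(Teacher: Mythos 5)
Your overall architecture (randomized stopping times, lexicographic \optsep, monotonicity principle, Loynes-type uniqueness) matches the paper's, but there are two concrete problems. First, your Step 2 stop-go analysis does not go through with only the two components $\varphi(\overline B_\tau)$ and $-\psi(\underline B_\tau)$. A stop-go pair must yield a \emph{strict} lexicographic inequality for \emph{every} admissible continuation $\sigma$, and for the pairs you need to exclude (a path stopped strictly between its running extrema versus its own initial segment cut at an earlier visit to the same level) there are continuations that never exceed the old running maximum nor fall below the old running minimum; for such $\sigma$ both of your components are exactly tied and the pair is not a stop-go pair, so the monotonicity principle gives you nothing. Your parenthetical ``in case of equality, strictly improves the running minimum'' is false precisely in this regime. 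The paper repairs this by optimizing a four-component objective, appending the tie-breakers $-\varphi(\overline f)f(s)^2$ and $-\varphi(-\underline f)f(s)^2$, whose stop-go conditions become strict (via $\E[B_\sigma^2]>0$) exactly when the first two are equalities. Without such auxiliary functionals the barrier structure cannot be extracted.

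Second, your Step 3 overclaims. You assert that the mirror construction $\tau_P'$ (minimum prioritized over maximum) coincides with $\tau_P$ and hence that a single stopping time is \emph{simultaneously} optimal for both extrema. This is false for general starting laws --- the paper (following Hobson--Pedersen) explicitly notes that simultaneous optimization is impossible, and the theorem only claims that $\tau_P$ maximizes the running minimum \emph{among the minimizers of the running maximum}, i.e.\ sequential optimality. Your appeal to Loynes to identify $\tau_P$ with $\tau_P'$ also fails structurally: the mirror construction produces a barrier of the reflected type, not a vh-barrier in the sense covered by the uniqueness statement. Relatedly, you treat the Loynes step as routine, but the process $(\overline B,\underline B)$ and a vh-barrier are not a setting where the classical argument applies directly; the paper must first re-encode the vh-barrier as a Rost-type inverse barrier on a doubled real line $\mathrm D=\mathrm R'\cup\R$ with a flipped order on one copy, splitting the hitting time as $\overline\tau_R\wedge\underline\tau_R$, before the level-by-level comparison of barriers can be run. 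What the uniqueness \emph{does} buy --- and this is how the paper upgrades expectation-optimality to first-order stochastic dominance --- is that the solution is independent of the auxiliary function $\varphi$, hence optimal for all increasing $\varphi$; your ``comparison argument in the spirit of Root/Rost'' should be replaced by exactly this observation.
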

A possible realization of a general starting Perkins solution can be seen in Figure \ref{fig:Perkins} (B). 
We point out two important properties of this solution. 
Firstly  external randomization is only needed at time 0 and only if the two measure $\lambda$ and $\mu$ share mass. 
This answers a question raised in    \cite[Remark 2.3]{HoPe02} concerning the existence of adapted solutions in this setting. 
Secondly this solutions features the same representation as the hitting time of the process $(\overline B, \underline B)$ as the original Perkins solution and recovers it in the $\lambda = \delta_0$ case as illustrated in Figure \ref{fig:Perkins}.

We will focus on discussing the Perkins solution from a \emph{barrier type solution} viewpoint.
%
%
%
%
%
%
\subsection*{Barrier Type Solutions} 
\label{s:barriertypesolutions}
A \emph{barrier type solution} to \sep\, is traditionally of the form
\begin{equation}\label{eq:barriertypesolution}
    \tau_R = \inf \left\{t \geq 0 : (A_t, B_t) \in R \right\}
\end{equation}
for a sufficiently regular processes $A_t$ and a Borel set $R \seq \R^2$ featuring some additional \emph{barrier} structure.

Barrier type solutions not only come with natural geometric interpretations, 
we also see that all these solution are adapted to the filtration of  the underlying Brownian motion (apart from potential additional randomization at time $0$ if $\lambda$ and $\mu$ share mass).
Furthermore, barrier type solutions feature the following intrinsic uniqueness property highlighted by  Loynes \cite{Lo70}: 
if $S \seq \R^2$ is another barrier such that $\tau_S$ solves \sep\, then we have $\tau_R = \tau_S$ almost surely.
Prototypical barrier type solutions and also the solutions to originally coin the notion of a barrier resp.\  inverse barrier 
are the solutions by Root \cite{Ro69} and Rost \cite{Ro71,Ro76}.
A (Root) \emph{barrier} is a set $R \seq \R_+ \times \R$ such that $(s,x) \in R$ implies $(t,x) \in R$ for all $t > s$
 while for an \emph{inverse barrier} $S \seq \R_+ \times \R$ we require that $(s,x) \in S$ implies $(t,x) \in S$ for all $t < s$.
The Root resp.\  Rost solution is then given as the hitting time
\[
    \tau_{\text{Root}} = \inf \left\{t \geq 0 : (t, B_t) \in R \right\} 
    \quad \text{resp.} \quad 
    \tau_{\text{Rost}} = \inf \left\{t \geq 0 : (t, B_t) \in S \right\}
\]
and is known for \emph{minimizing} resp.\  \emph{maximizing} $\E[\tau^2]$ among all solutions to \sep\, (see \cite{Ki72} and \cite{Ro76}).
To be precise, in the Rost case,  if the initial and the terminal law share mass, this shared mass might need to be stopped immediately,
i.e.\ \(
    P\left[\tau_{\text{Rost}} = 0, B_0 \in A\right]  = (\lambda \wedge \mu) (A). 
\)

Another barrier type solution to \sep\, worth mentioning here is the Az\'ema-Yor solution \cite{AzYo79} which is  
known for maximizing the law of the running maximum of the underlying Brownian motion. 
This solution is also given as the hitting time of a barrier in the Root sense, however of the joint process $(\max_{s \leq t}B_s, B_t)$, thus
\[
	\tau_{AY} =  \inf \left\{t \geq 0 : \left(\max_{s \leq t}B_s, B_t\right) \in R \right\}.
\]
A general starting law version was found by Hobson in \cite{Ho98b} and - as in the Root case - 
the barrier type representation of the solution did not change. 
See Figure \ref{fig:BarrierTypeSolutions} for examples of Root, Rost and Az\'ema-Yor barrier type solution.
\begin{figure}
\centering
\begin{subfigure}{.3\linewidth}
\centering
\includegraphics[width = 1\linewidth]{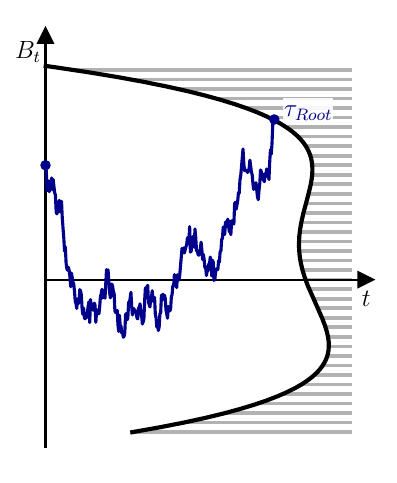}
\caption{Root solution.}
\label{fig:Root}
\end{subfigure}
\quad
\begin{subfigure}{.3\linewidth}
\centering
\includegraphics[width = 1\linewidth]{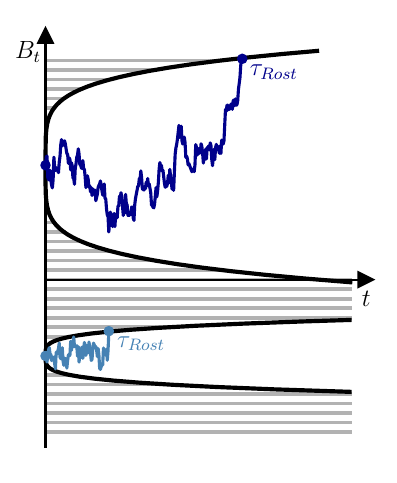}
\caption{Rost solution.}
\label{fig:Rost}
\end{subfigure}
\quad
\begin{subfigure}{.3\linewidth}
\centering
\includegraphics[width = 1\linewidth]{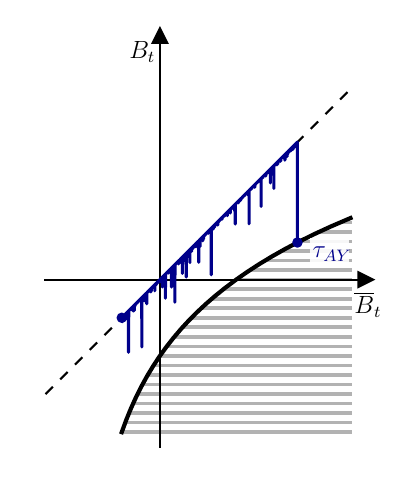}
\caption{Az\'ema-Yor solution.}
\label{fig:AzemaYor}
\end{subfigure}
\caption{Barrier type solutions to \sep.}
\label{fig:BarrierTypeSolutions}
\end{figure}
As both the Perkins solution and the Hobson-Pedersen solution \emph{minimize} the law of the running maximum of the underlying Brownian motion, 
they can be seen as a pendant to the Az\'ema-Yor embedding in a similar way as the Rost embedding can be seen as a pendant to the Root embedding.

Further barrier type solutions to \sep\, are e.g.\  the Vallois embedding \cite{Va83}, the Jacka embedding \cite{Ja88} or the cave embedding \cite{BeCoHu14}. We also refer to \cite{BeCoHu14} for a unifying framework for barrier type solutions. 

Notably it is not possible to find a process $(A_t)$ such that the Perkins solution can be represented as a barrier type solution in the classical sense \eqref{eq:barriertypesolution}, that is as a hitting time of the joint process $(A_t, B_t)$.
However, it is possible to identify it as a hitting time of the process $(\overline B, \underline B)$ of a vh-barrier as described above. 

In \cite{BeCoHu14} the authors use ideas and concepts of optimal mass transport to give existence results for solutions to \sep\, featuring additional extremal properties and moreover provide techniques on how connections between these additional properties of the solutions and their geometric characterizations as barrier type solutions can be made.
Minor variations of these techniques will allow us prove Theorem \ref{thm:maintheorem} and moreover to identify the right kind of phase space to interpret the Perkins embedding as the hitting time of a Rost inverse barrier, completing the Az\'ema-Yor$\leftrightarrow$Perkins -  Root$\leftrightarrow$Rost analogy described earlier. 
Furthermore this representation will enable us to prove a Loynes-type uniqueness result for  this solution.
%
%
%
%
%
%
\subsection*{Outline of the Article}
In Section \ref{s:MultifoldOptimalSEP} we recall  notation and results of \cite{BeCoHu14} and extend them to a multifold optimization problem over solutions to \sep.
These results will be applied in Section \ref{s:Perkins} 
to obtain existence of a Perkins embedding with general starting law as a solution to \sep\, with the desired additional extremal properties.
In Section \ref{s:uniqueness} we discuss uniqueness of barrier type solutions and propose a setting in which these results can be applied to the Perkins embedding.
This will conclude the proof of Theorem \ref{thm:maintheorem}.
%
%
%
%
%
%
%
%
%
%
%
%
\section{Multifold Optimal Skorokhod Embedding} \label{s:MultifoldOptimalSEP}
%
For this article we will adopt the underlying assumptions of \cite{BeCoHu14}, that is, we will work on a stochastic basis $\Omega= (\Omega, \G, (\G_{t})_{t \geq 0}, P)$ which is rich enough to support a Brownian motion $B$ and a uniformly distributed $\G_{0}$-random variable independent of $B$.
%
%
%
%
%
%
\subsection{Optimal Skorokhod Embedding}
%
The theory of finding solutions to the Skorokhod embedding problem featuring some optimality properties is presented from a systematic viewpoint  in \cite{BeCoHu14}, see also \cite{GuTaTo15b}. 
Historically it was also common to find a new solution to \sep\ and to only  later establish its additional optimality property. 
In \cite{BeCoHu14} this was turned around by imposing an optimization problem to the Skorokhod embedding problem and then to derive structural properties of the embedding problem from a `first order condition'.

For this we consider the set of \emph{stopped paths}.
\begin{dfn}[stopped paths]
We define the set of \emph{stopped paths} 
\[ 
S:= \left\{(f,s): s \geq 0, f \in C([0,s],\R) \right\}. 
\]
\end{dfn}
The optimal Skorokhod embedding is formulated in the following way:
\begin{dfn}[optimal Skorokhod embedding problem]
For a function
\[
	\gamma : S \rightarrow \R
\]
the \emph{optimal Skorokhod embedding problem} is to find a stopping time $\tau$ on $\Omega$ solving the following optimization problem
\[
	\inf \left\{ \E\left[\gamma((B_{t})_{t \leq \tau}, \tau)\right]: \tau \text{ solves \sep} \right\}. 								\tag{\text{OptSEP$_{\lambda, \mu}$}}
\]
\end{dfn}
We will always assume (OptSEP$_{\lambda, \mu}$) to be  \emph{well posed} in the sense that for all stopping times solving \sep, we have that $\E\left[\gamma((B_{t})_{t \leq \tau}, \tau)\right]$ exists, that $\E\left[\gamma((B_{t})_{t \leq \tau}, \tau)\right]\in (-\infty, \infty]$ and that there is at least one stopping time $\tau$ such that $\E\left[\gamma((B_{t})_{t \leq \tau}, \tau)\right]< \infty$.

The solutions presented in the introduction can be recovered as solutions to the optimal Skorokhod embedding problem in the following way.
The Root solution can be obtained by choosing $\gamma((f,s)):= s^2$, 
the Rost solution by $\gamma((f,s)):= -s^2$ and the Az\'ema-Yor solution by $\gamma((f,s)) := - \overline f = \max_{t \leq s} f(t)$. 
Many more classic solutions to the Skorokhod embedding problem can be represented this way, see \cite{BeCoHu14}.

As pointed out in the introduction, the Perkins solution stands out due to its two fold optimality property. 
The optimal Skorokhod embedding problem as described above will not suffice to describe this special embedding.
Also, in the construction of optimal solutions it is often useful to impose additional auxiliary  optimality conditions in order to guarantee uniqueness of solutions.
Thus we consider the following multifold optimal Skorokhod embedding problem.
%
\begin{dfn}[multifold optimal Skorokhod embedding problem]
Let $n \in \N$ and consider a function 
\[
\gamma=(\gamma_{1}, \dots, \gamma_{n}): S \rightarrow \R^{n}.
\]
Let $\mathrm{Opt}^{(1)}_{\gamma}$ be the set of all $\G$-stopping times $\tau$ on $\Omega$ solving the following optimization problem
\[
    \inf \left\{ \E\left[ \gamma_{1}((B_{t})_{t \leq \tau}, \tau)\right]: \tau \text{ solves \sep} \right\}. 						\tag{\text{OptSEP$^{(1)}_{\lambda, \mu}$}}
\]
\ \\ For $j \in \{2, \dots, n\}$ define $\mathrm{Opt}^{(j)}_{\gamma}$ as the set of all stopping times $\tau \in \mathrm{Opt}^{(j-1)}_{\gamma}$ that solve the optimization problem
\[
    \inf \left\{ \E\left[\gamma_{j}((B_{t})_{t \leq \tau}, \tau)\right]: \tau \in \mathrm{Opt}^{(j-1)}_{\gamma} \right\}.
			 			\tag{\text{OptSEP$^{(j)}_{\lambda, \mu}$}}
\]
\ \\ We call $(\mathrm{OptSEP}_{\lambda, \mu}) := (\mathrm{OptSEP}^{(n)}_{\lambda, \mu})$ the \emph{multifold optimal Skorokhod embedding problem}.
\end{dfn}
The optimization problem $(\mathrm{OptSEP}^{(1)}_{\lambda, \mu})$ is \emph{well posed} if for all stopping times solving \sep, 
 we have that $\E\left[\gamma_{1}((B_{t})_{t \leq \tau}, \tau)\right]$ exists, that $\E\left[\gamma_{1}((B_{t})_{t \leq \tau}, \tau)\right]\in (-\infty, \infty]$ and that there is at least one stopping time $\tau$ so that $\E\left[\gamma_{1}((B_{t})_{t \leq \tau}, \tau)\right]< \infty$.
We call $(\mathrm{OptSEP}_{\lambda, \mu})$ \emph{well posed}, if $(\mathrm{OptSEP}^{(1)}_{\lambda, \mu})$ is well posed and for all $j \in \{2, \dots, n \}$ the problem $(\mathrm{OptSEP}^{(j)}_{\lambda, \mu})$ is well posed in the above sense (considering stopping times in $\mathrm{Opt}^{(j-1)}_{\gamma}$).
%
%
%
%
%
%
\subsection{Randomized Stopping Times}
%
The requirement of solutions to \sep\, to be stopping times with respect to the filtration generated by Brownian motion is often too restrictive (as seen for the Hobson-Pedersen solution in the next section).

The key idea of linking the Skorokhod embedding problem to optimal transport is to think of a stopping time $\tau$ as a transport plan, mapping the mass of a trajectory $(B_t(\tilde \omega))_{t \geq 0}$ in the space  $(C(\R_+), \mathbb W_{\lambda})$ (where $\mathbb W_{\lambda}$ denotes the law of a Brownian motion started according to the distribution $\lambda$) to the endpoint $B_{\tau(\tilde \omega)}(\tilde \omega)$ in $\R$.
Even tough optimal transport theory cannot be applied directly, the appropriate analogues for this setup are developed in \cite{BeCoHu14}. 

The necessary relaxation of this problem is to  consider so called \emph{randomized stopping times} which can be seen as stopping times in the usual sense but on a probability space that is possibly enlarged.

The idea of randomized stopping times is made precise by defining them as a subset of subprobability measures on $C(\R_+) \times \R_+$, 
for details we refer to  \cite[Chapter 3.2]{BeCoHu14}.
\begin{dfn}[randomized stopping times]
A subprobability measure $\xi$ on $C(\R_+) \times \R_+$ is called a \emph{randomized stopping time} (of Brownian motion with initial distribution $\lambda$) if
	\begin{enumerate}[(i)]
		\item $\mathrm{proj}_{C(\R_+)} \leq \mathbb W_{\lambda}$.
		\item Given the disintegration $(\xi_{\omega})_{\omega \in C(\R_+)}$ of $\xi$ w.r.t.\ the first coordinate $\omega \in C(\R_+)$, $\tilde A_t(\omega) = \xi_{\omega}([0,t])$ is $\F_t ^a$-measurable for all $t \in \R_+$ (Here $(\F^a)$ denotes the natural augmented filtration on $C(\R_+)$).
	\end{enumerate}
For the probability measure $\mu$ on $\R$ we define the subset $\mathrm{RST}_{\lambda}(\mu)$ which consists of those $\xi \in \mathrm{RST}_{\lambda}$ such that
	\begin{enumerate}[(i)]
		\item $\mathrm{proj}_{C(\R_+)} = \mathbb W_{\lambda}$,
		\item For all $A \in \G$ we have $\xi(\{(\omega,t) \in C(\R_+) \times \R_+ : \omega(t) \in A \}) = \mu(A)$.
	\end{enumerate}
\end{dfn}
%
%
%
%
%
%
\subsection{Existence of an Optimizer}
%
One important result of \cite{BeCoHu14} is to establish  existence  of optimizers under fairly general conditions. 
Even though the main proofs were carried out for $n=1$ (Theorem 4.1) and generalizations are provided for $n=2$ (Theorem 6.1), we can generalize in precisely the same way to arbitrary $n \in \N$.
Let us formulate the optimal Skorokhod embedding problem for randomized stopping times and then give the existence results for this generalized problem.
\begin{dfn}[(OptSEP$\star_{\lambda, \gamma}$)]
Let $n \in \N$ and consider a function 
\[
\gamma=(\gamma_{1}, \dots, \gamma_{n}): S \rightarrow \R^{n}.
\]
Let $\mathrm{Opt}\star^{(1)}_{\gamma}$ be the set of all randomized stopping times solving the following optimization problem
\[
\inf \left\{  \int_{C(\R_+) \times  \R_+}   \gamma_1 ((\omega|_{[0,t]}, t)) \mathrm d \xi ((\omega, t)) : \xi \in \mathrm{RST}_{\lambda}(\mu) \right \}. 						\tag{\text{OptSEP$\star^{(1)}_{\lambda, \mu}$}}
\]
\ \\ For $j \in \{2, \dots, n\}$ define $\mathrm{Opt} \star ^{(j)}_{\gamma}$ as the set of all stopping times $\tau \in \mathrm{Opt} \star ^{(j-1)}_{\gamma}$ wich solve the optimization problem
\[
\inf \left \{  \int_{C(\R_+) \times  \R_+}   \gamma_j ((\omega|_{[0,t]}, t)) \mathrm d \xi ((\omega, t)): \xi \in \mathrm{Opt} \star^{(j-1)}_{\gamma} \right \}.
			 			\tag{\text{OptSEP$\star^{(j)}_{\lambda, \mu}$}}
\]
\ \\ We define $(\mathrm{OptSEP}\star _{\lambda, \mu}$):= $(\mathrm{OptSEP}\star ^{(n)}_{\lambda, \mu})$.
\end{dfn}
\begin{thm}[Existence of a minimizer, cf. \cite{BeCoHu14}, Theorem 4.1] 
\label{existenceTheorem}
Let $\gamma:S \rightarrow \R^{n}$ be lsc and bounded from below in the following sense:
\\ \indent For all $j \in \{1, \dots, n\}$ there exist  constants $a_{j},b_{j},c_{j} \in \R_{+}$ such that
	\begin{equation} \label{boundedness}
		-\left( a_{j}+b_{j}t+c_{j} \max_{s \leq t}B^{2}_{s}\right) \leq \gamma_{j}((B_{s})_{s \leq t},t)).
	\end{equation}
holds on $C(\R_+) \times \R_+$.
\ \\
 Then $\mathrm{(OptSEP}\star _{\lambda, \mu})$ admits a minmizer $\xi \in \mathrm{RST}_{\lambda}(\mu)$.
\end{thm}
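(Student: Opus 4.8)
The plan is to follow the compactness-and-lower-semicontinuity argument used in \cite{BeCoHu14} for $n=1$ (Theorem 4.1) and $n=2$ (Theorem 6.1), and to iterate it $n$ times.

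First I would recall the two structural facts from \cite{BeCoHu14}. (i) The set $\mathrm{RST}_{\lambda}(\mu)$, regarded as a set of subprobability measures on $C(\R_+) \times \R_+$ with $C(\R_+)$ carrying the topology of uniform convergence on compacts, is nonempty --- by the standing assumption that $\lambda$ precedes $\mu$ in convex order --- and compact; the compactness rests on the finite second moment of $\mu$, which gives uniform control of $\int t \,\mathrm d\xi$ and of the tails of the endpoint marginal, hence tightness. (ii) If $\phi : S \to \R$ is lsc and satisfies a bound of the form \eqref{boundedness}, then $\xi \mapsto \int_{C(\R_+)\times\R_+} \phi((\omega|_{[0,t]},t)) \,\mathrm d\xi$ is lower semicontinuous on $\mathrm{RST}_{\lambda}(\mu)$; here the lsc of $\phi$ on $S$ transfers to lsc of the integrand on $C(\R_+)\times\R_+$ along the restriction map, and the one-sided growth bound is harmless because $\int (a + bt + c \max_{s\le t}\omega(s)^2)\,\mathrm d\xi$ is bounded uniformly over $\xi \in \mathrm{RST}_{\lambda}(\mu)$, every such $\xi$ encoding a stopping time $\tau$ with $\E[\tau]$ and $\E[\overline B_\tau^2]$ controlled by the second moments of $\lambda$ and $\mu$. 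Applying (ii) componentwise, the functionals $\Phi_j(\xi) := \int \gamma_j((\omega|_{[0,t]},t)) \,\mathrm d\xi$ are lsc on the compact set $\mathrm{RST}_{\lambda}(\mu)$ and bounded below by a finite constant, so that --- together with well-posedness --- the infima below are real numbers.

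Then I would run an induction on $j \in \{1,\dots,n\}$ with inductive hypothesis: $\mathrm{Opt}\star^{(j)}_{\gamma}$ is a nonempty compact subset of $\mathrm{RST}_{\lambda}(\mu)$. For $j=1$, the lsc function $\Phi_1$ on the nonempty compact set $\mathrm{RST}_{\lambda}(\mu)$ attains its infimum $m_1$, and $\mathrm{Opt}\star^{(1)}_{\gamma} = \{\Phi_1 \le m_1\}$, a sublevel set of an lsc function, is closed in $\mathrm{RST}_{\lambda}(\mu)$ and hence compact. For the step $j \to j+1$, the restriction of $\Phi_{j+1}$ to the nonempty compact set $\mathrm{Opt}\star^{(j)}_{\gamma}$ is lsc, hence attains its infimum $m_{j+1}$, and $\mathrm{Opt}\star^{(j+1)}_{\gamma} = \{\xi \in \mathrm{Opt}\star^{(j)}_{\gamma} : \Phi_{j+1}(\xi) \le m_{j+1}\}$ is a closed subset of a compact set, hence nonempty and compact. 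Taking $j=n$ gives $(\mathrm{OptSEP}\star_{\lambda,\mu}) = \mathrm{Opt}\star^{(n)}_{\gamma} \neq \emptyset$, which is the claim.

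The substantive content lies entirely in facts (i) and (ii) --- the compactness of $\mathrm{RST}_{\lambda}(\mu)$ and the lower semicontinuity of integration against the merely lsc, merely one-sidedly bounded integrands $\gamma_j$ --- and this is exactly where the second moment assumption on $\mu$ and the growth bound \eqref{boundedness} enter; both are already established in \cite{BeCoHu14}. The only extra observation needed for general $n$ is that the argmin set at each stage is again compact, i.e.\ the elementary fact that a sublevel set of an lsc function on a compact set is compact, so the $n=1,2$ arguments of \cite{BeCoHu14} extend with no new analytic input. I expect this propagation of compactness through the nested optimizations to be the only point requiring care, while the genuinely delicate estimates have all been imported.
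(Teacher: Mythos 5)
Your argument is correct and is exactly the route the paper intends: it gives no proof of its own beyond citing \cite{BeCoHu14} (Theorems 4.1 and 6.1) and asserting that the $n=1,2$ compactness-plus-lower-semicontinuity argument iterates to general $n$. Your write-up supplies precisely that iteration --- compactness of $\mathrm{RST}_{\lambda}(\mu)$, lsc of the functionals $\Phi_j$ under the one-sided bound \eqref{boundedness}, and the observation that each argmin set is again nonempty and compact --- with no gaps.
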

The  following lemma provides equivalence between optimization over stopping times on our enlarged probability space and optimization over randomized stopping times.
\begin{lem}[cf. \cite{BeCoHu14}, Lemma 3.11]
Let $\tau$ be a $(G_t)_{t \geq 0}$-stopping time and consider the map
	\[
		\Phi: \Omega \rightarrow C(\R_+) \times \R_+, \quad	\overline \omega \mapsto ((B_t(\overline \omega))_{t \geq 0}, \tau(\overline \omega)).
	\]
Then $\xi := \Phi_{\#} \P \in \mathrm{RST}_{\lambda}$  and for any measurable map $\gamma: S \rightarrow \R$ we have
	\begin{equation}
		\int \gamma((f,s))d r_{\#} \xi ((f,s)) = \E_{\P} \left[\gamma((B_t)_{t \leq \tau}, \tau)\right]  \tag{$\star$}
	\end{equation}
for the function
	\[
		r: C(\R_+) \times \R_+ \rightarrow S, \quad (\omega, t) \mapsto (\omega|_{[0,t]}, t).
	\]
For any randomized stopping time $\xi \in \mathrm{RST}_{\lambda}$ we can find a $(G_t)_{t \geq 0}$-stopping time $\tau$ such that $\xi := \Phi_{\#} P$ and $(\star)$ holds.
\end{lem}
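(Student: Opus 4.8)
The plan is to prove the two assertions separately — that $\Phi$ pushes $\P$ to an element of $\mathrm{RST}_{\lambda}$ together with the identity $(\star)$, and conversely that every randomized stopping time arises in this way — both being essentially bookkeeping with pushforward measures and disintegrations, exactly as in \cite[Chapter 3.2]{BeCoHu14}.

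For the first assertion, I would begin by noting that the first coordinate of $\Phi$ is simply the Brownian trajectory $\overline\omega\mapsto (B_t(\overline\omega))_{t\geq 0}$, whose law under $\P$ is $\mathbb{W}_\lambda$ by the standing assumption on the stochastic basis; hence $\mathrm{proj}_{C(\R_+)}\xi = \mathbb{W}_\lambda$, which in particular yields condition (i). For condition (ii) I would disintegrate $\xi$ along its first coordinate, $\xi=\int \xi_\omega\,\mathbb{W}_\lambda(d\omega)$, and identify $\xi_\omega([0,t])$ with a regular conditional probability $\P[\tau\leq t\mid B=\omega]$; since $\{\tau\leq t\}\in\G_t$ and $\G_t$ is the $\P$-augmentation of $\F^a_t$ enlarged by the $\G_0$-measurable uniform variable, a version of this conditional probability can be chosen $\F^a_t$-measurable in $\omega$, which is the required adaptedness of $\tilde A_t$. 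The identity $(\star)$ is then the elementary change-of-variables chain
\[
\int \gamma\,d(r_{\#}\xi)=\int \gamma\circ r\,d\xi=\int \gamma\circ r\circ\Phi\,d\P=\E_\P\big[\gamma((B_t)_{t\leq\tau},\tau)\big],
\]
where the last step uses $r\circ\Phi(\overline\omega)=((B_t(\overline\omega))_{t\leq\tau(\overline\omega)},\tau(\overline\omega))$.

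For the converse, given $\xi\in\mathrm{RST}_{\lambda}$ I would first, if $\mathrm{proj}_{C(\R_+)}\xi$ is a strict subprobability, absorb the defect mass $\mathbb{W}_\lambda-\mathrm{proj}_{C(\R_+)}\xi$ onto the slice $\{t=+\infty\}$, so that disintegrating along $C(\R_+)$ yields for $\mathbb{W}_\lambda$-a.e.\ $\omega$ a probability measure $\xi_\omega$ on $[0,\infty]$. Writing $\tilde A_t(\omega)=\xi_\omega([0,t])$, which is right-continuous and non-decreasing in $t$ and $\F^a_t$-measurable in $\omega$ for each fixed $t$, I would use the $\G_0$-measurable uniform variable $U$ on $\Omega$, independent of $B$, to define
\[
\tau(\overline\omega):=\inf\{t\geq 0:\tilde A_t(B(\overline\omega))\geq U(\overline\omega)\}.
\]
Modulo the $\mathbb{W}_\lambda$-null set where the disintegration misbehaves and modulo careful treatment of the at most countably many atoms of $t\mapsto\tilde A_t(\omega)$, the event $\{\tau\leq t\}$ agrees with $\{\tilde A_t(B)\geq U\}\in\G_t$, so $\tau$ is a $(\G_t)$-stopping time; by construction the conditional law of $\tau$ given $B=\omega$ is $\xi_\omega$, so $\Phi_{\#}\P=\xi$, whence $(\star)$ follows from the first part.

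The main technical obstacle is the measurable-selection point appearing in both directions: one must be able to choose the disintegration $\omega\mapsto\xi_\omega$ so that $(\omega,t)\mapsto\tilde A_t(\omega)$ is jointly measurable, adapted (that is, $\F^a_t$-measurable in $\omega$ for every $t$) and right-continuous in $t$ simultaneously, and one must check that the generalized-inverse construction with $U$ genuinely produces a $(\G_t)$-stopping time rather than merely a measurable map. These are precisely the points settled in \cite[Chapter 3.2]{BeCoHu14} for the one-marginal randomized Skorokhod embedding, and since the definition of $\mathrm{RST}_{\lambda}$ used here is verbatim theirs, it suffices to invoke that construction; the statement is in essence \cite[Lemma 3.11]{BeCoHu14}.
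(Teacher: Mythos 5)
Your proposal is correct: the paper itself offers no proof of this lemma, deferring entirely to \cite[Lemma 3.11 and Chapter 3.2]{BeCoHu14}, and your sketch (pushforward/disintegration for the forward direction, the quantile construction $\tau=\inf\{t:\tilde A_t(B)\geq U\}$ with the independent uniform $U$ for the converse) is precisely the standard argument from that reference. The technical points you flag — $\F^a_t$-measurable versions of the disintegration and the treatment of defect mass at $t=+\infty$ — are indeed exactly the ones handled there, so deferring them is consistent with how the paper uses the lemma.
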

%
%
%
%
%
%
\subsection{Monotonicity Principle}
In optimal transport theory it was an important development to be able to identify optimal transport plans by the geometry of its support - see \emph{c-cyclical monotonicity}.
A monotonicity principle (`first order condition') similar to $c$-cyclical monotonicity for optimal Skorokhod embeddings is given in \cite{BeCoHu14}. 
We define the \emph{support} of a stopping time $\tau$ as a subset $\Gamma \seq S$ such that
	\[
		\P\left[((B_t)_{t \leq \tau}, \tau) \in \Gamma\right] =1.
	\]
Specific properties of this set $\Gamma$ will be crucial to our geometric approach to the Skorokhod embedding problem.

We will consider possible continuations of a given path and therefore define an operation of concatenation.
\begin{dfn}[concatenation of paths]
For two paths $(f,s),(g,t) \in S$ we define an operation of \emph{concatenation} $\oplus$ by 
\[
    (f \oplus g) ( r ):=    \begin{cases} f( r ) & r \in [0,s]
							    \\ f(s)-g(0)+g(r-s) & r \in (s, s+t].
							\end{cases}
\]
\end{dfn}
\begin{dfn}[going paths]
For any set of stopped paths $\Gamma \seq S$ we define the set of initial segments of this paths as 
\[ 
\Gamma ^{<}:= \left\{(f,s) \in S: \exists (\tilde f,\tilde s)\in \Gamma \text{ such that } s < \tilde s \text{ and } f|_{[0,s]}= \tilde f |_{[0,s]}\right\}
\]
and call it the set of \emph{going paths}.
\end{dfn}
\indent In the multifold optimal Skorokhod embedding problem we consecutively optimize over functions of stopped paths. 
We will soon learn that it is interesting and useful to derive structural arguments about the sets of stopped paths satisfying these optimality conditions. 
In order to do this, we would like to have a strategy for dealing with different paths stopping at the same value. 
Among those paths we would like to identify those paths that should be stopped and those paths that should be allowed to continue - keeping in mind our optimization problem. 
This leads us to the notion of \emph{stop-go} pairs, which by considering possible continuations of the paths gives a rule on how to decide on which one to stop and which one to allow to continue.

\begin{dfn}[Stop-Go Pair]
A pair of paths $((f,s), (g,t)) \in S \times S$ is called a \emph{stop-go pair} with respect to $\gamma$ (short: \textsf{SG}-pair) if
\begin{enumerate}[(i)]
		\item $f(s)=g(t)$
		\item $\E\left[\gamma(f \oplus (B_{u})_{u \leq \sigma}, s+\sigma)\right] + \gamma(g,t) 
                > \gamma(f,s)+ \E\left[\gamma(g \oplus (B_{u})_{u \leq \sigma}, t+\sigma)\right]$
			\ \\  in the lexicographic ordering of $\R^{n}$ for every $(\F_{t}^{B})_{t \geq 0}$-stopping time $\sigma$ such that $\E[\sigma] \in (0, \infty)$, both sides are well defined and the left-hand side is finite in every component.
\end{enumerate}
For the set of all \textsf{SG}-pairs we will write
\[
\textsf{SG}_{\gamma}:= \left\{((f,s), (g,t)) \in S \times S :((f,s), (g,t)) \text{ is a stop-go pair with respect to } \gamma  \right\},
\]
and for $j \in \{1, \dots, n\}$ we will call
\[
\E\left[\gamma_{j}(f \oplus (B_{u})_{u \leq \sigma}, s+\sigma)\right]+ \gamma_{j}(g,t) \geq \gamma_{j}(f,s)+ \E\left[\gamma_{j}(g \oplus (B_{u})_{u \leq \sigma}, t+\sigma)\right]
\]
the \emph{$j$-th stop-go condition} (\textsf{SGC$_{j}$}).
\end{dfn}

We now want to identify sets of stopped paths, such that it is not advantageous to stop any of these paths earlier in comparison to the other paths in this set. 
In other words, the stopping rule cannot be improved within this set.

\begin{dfn}[$\gamma$-Monotonicity]
A set of stopped paths $\Gamma \seq S$ is called \emph{$\gamma$-monotone} if 
\[
    \textsf{SG}_{\gamma} \cap(\Gamma^{<} \times \Gamma) = \emptyset 
\]
\end{dfn}
\begin{thm}[Monotonicity Principle] \label{monotonicity}
Let $\gamma:S \rightarrow \R^{n}$ be Borel measurable and let $\tau$ be a minimizer of $(\mathrm{OptSEP}_{\lambda, \mu})$. 
Then there exists a $\gamma$-monotone Borel set $\Gamma \seq S$ such that 
\[ 
    \P\left[((B_{t})_{t \leq \tau}, \tau) \in \Gamma\right] = 1.
\]
\end{thm}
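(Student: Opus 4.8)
The plan is to adapt the proof of the monotonicity principle of \cite{BeCoHu14}, which treats $n=1$, to the lexicographic $n$-fold setting. The first step is to move to the relaxed picture: applying the preceding Lemma level by level, a minimizer $\tau$ of $(\mathrm{OptSEP}_{\lambda,\mu})$ on the rich space $\Omega$ induces $\xi:=\Phi_{\#}\P\in\mathrm{RST}_{\lambda}(\mu)$ that minimizes $(\mathrm{OptSEP}\star_{\lambda,\mu})$, and by the identity $(\star)$ applied to $\gamma=\1_{\Gamma}$ the claim $\P[((B_{t})_{t\le\tau},\tau)\in\Gamma]=1$ is equivalent to $r_{\#}\xi(\Gamma)=1$. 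So it suffices to produce a $\gamma$-monotone Borel set of full $r_{\#}\xi$-mass. Fix any Borel $\Gamma_{0}\seq S$ with $r_{\#}\xi(\Gamma_{0})=1$ and set
\[
    \mathrm{Bad}:=\left\{(g,t)\in\Gamma_{0}: \exists\,(f,s)\in\Gamma_{0}^{<}\ \text{with}\ ((f,s),(g,t))\in\SG\right\}.
\]
Since $\SG$ is Borel for Borel $\gamma$ (which rests on the routine measurability analysis of \cite{BeCoHu14}: $(f,s)\mapsto\E[\gamma_{j}(f\oplus(B_{u})_{u\le\sigma},s+\sigma)]$ is Borel, and quantifying the lexicographic inequality over admissible $\sigma$ yields a Borel condition) and $\Gamma_{0}^{<}$ is a projection of a Borel set, $\mathrm{Bad}$ is analytic, hence $r_{\#}\xi$-measurable. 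If $r_{\#}\xi(\mathrm{Bad})=0$, enclose $\mathrm{Bad}$ in a Borel $r_{\#}\xi$-null set $N$ and put $\Gamma:=\Gamma_{0}\setminus N$; then $r_{\#}\xi(\Gamma)=1$, and because $\Gamma^{<}\seq\Gamma_{0}^{<}$, any $((f,s),(g,t))\in\SG$ with $(f,s)\in\Gamma^{<}$ and $(g,t)\in\Gamma$ would force $(g,t)\in\mathrm{Bad}\seq N$, contradicting $(g,t)\in\Gamma$; hence $\Gamma$ is $\gamma$-monotone. Everything therefore reduces to proving $r_{\#}\xi(\mathrm{Bad})=0$.

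For this, assume $r_{\#}\xi(\mathrm{Bad})>0$. By a measurable selection theorem (von Neumann--Aumann) one may choose, on a positive-mass subset of $\mathrm{Bad}$, a Borel partner map $(g,t)\mapsto(f,s)=(f(g,t),s(g,t))\in\Gamma_{0}^{<}$ realizing an $\SG$ pair, and disintegrate over the common value $y=f(s)=g(t)$. The competitor $\xi'$ is then obtained by a local rerouting built with the independent $\G_{0}$-uniform randomizer: divert an $\e$-fraction of the $\xi$-mass stopped in the configuration $(g,t)$ so that, instead of stopping, it runs on as a fresh Brownian path stopped by the (possibly randomized) rule $\sigma$ governing the selected going paths at $(f,s)$, and symmetrically stop an $\e$-fraction of that going mass already at time $s$. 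Since $f(s)=g(t)=y$, the strong Markov property makes the grafted continuations carry exactly the terminal law of the diverted $g$-mass, while the newly stopped $f$-paths contribute terminal value $y$; hence the terminal marginal is unchanged and $\xi'\in\mathrm{RST}_{\lambda}(\mu)$. The resulting change of cost is, in each coordinate $j$, equal to $-\e$ times the $\SG$ defect $\E[\gamma_{j}(f\oplus(B_{u})_{u\le\sigma},s+\sigma)]+\gamma_{j}(g,t)-\gamma_{j}(f,s)-\E[\gamma_{j}(g\oplus(B_{u})_{u\le\sigma},t+\sigma)]$ integrated against the law of $\sigma$, and by definition of $\SG$ this is strictly negative in the lexicographic order. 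Letting $j^{*}$ be the first coordinate with strict sign, $\xi'$ has the same $\gamma_{1},\dots,\gamma_{j^{*}-1}$-costs as $\xi$ and a strictly smaller $\gamma_{j^{*}}$-cost; promoting $\xi'$ up the optimality levels one at a time then shows $\xi'\in\mathrm{Opt}\star^{(j^{*}-1)}_{\gamma}$, contradicting the optimality of $\xi$ at level $j^{*}$. Hence $r_{\#}\xi(\mathrm{Bad})=0$, which completes the proof.

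The conceptual skeleton is exactly the $n=1$ argument of \cite{BeCoHu14}, and the only new bookkeeping --- the lexicographic comparison and the level-by-level promotion of $\xi'$ --- is routine and mirrors the induction already used for the existence theorem. The step I expect to be the genuine obstacle is the rigorous construction of $\xi'$: turning the informal \emph{$\e$-fraction rerouting} into an honest manipulation of subprobability measures on $C(\R_{+})\times\R_{+}$ that still satisfies the adaptedness clause~(ii) of the definition of a randomized stopping time, implemented measurably in the disintegration parameter $y$; together with controlling integrability so that the coordinatewise cost comparison is legitimate --- for which the finiteness conditions built into the definition of $\SG$ and the standing well-posedness assumption are exactly tailored.
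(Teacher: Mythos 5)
The paper does not actually prove this theorem: it states it as an import from \cite{BeCoHu14} (the $n=1$ case is their Theorem 5.7, the two-fold case their Section 6/7) and remarks that the extension to general $n$ and the reformulation for randomized stopping times are ``technicalities'' deferred to that reference. Your outline correctly reproduces the intended strategy --- pass to $\mathrm{RST}_\lambda(\mu)$ via the pushforward lemma, show the set of ``bad'' stopped paths is negligible, and handle the $n$-fold optimality by a lexicographic first-strict-coordinate argument with level-by-level promotion of the competitor. That bookkeeping is indeed the only genuinely new content relative to \cite{BeCoHu14}, and you have it right.

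However, the way you propose to carry out the core step deviates from the actual argument in \cite{BeCoHu14} in two places where the deviation is fatal rather than cosmetic. First, $\SG$ is \emph{not} Borel for Borel $\gamma$: condition (ii) in the definition quantifies universally over an uncountable family of stopping times $\sigma$, so $\SG$ is a priori only co-analytic; consequently your set $\mathrm{Bad}$, being a projection of $(\Gamma_0^{<}\times\Gamma_0)\cap\SG$, need not be analytic, and the ``enclose in a Borel null set'' step is not available as stated. Second, and more importantly, a von Neumann--Aumann selection of a single partner $(f,s)$ for each bad $(g,t)$ does not yield a usable modification $\xi'$: even if $r_{\#}\xi(\mathrm{Bad})>0$, the selected going paths $(f,s)$ may all lie in an evanescent set for the ``going mass'', so there is no positive amount of mass to stop early, and the rerouting transfers nothing. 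This is precisely why \cite{BeCoHu14} works with \emph{joinings} between the going mass and the stopped mass concentrated on $\SG$, and proves by a duality/capacitability argument (Jankov--von Neumann uniformization plus their Theorem 5.8 and Proposition 5.9) that either such a joining with positive mass exists --- in which case the two-sided rerouting contradicts optimality --- or $\SG$ is contained in $(F\times S)\cup(S\times G)$ with $F$ evanescent and $G$ null, from which the $\gamma$-monotone $\Gamma$ is extracted. Your sketch replaces this dichotomy with a one-sided selection argument that does not close; the lexicographic layer you add on top is fine, but the foundation underneath it needs to be the joining/duality machinery, not measurable selection of partners.
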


Similar to the existence result also the monotonicity principle can be formulated for randomized stopping times.
As these technicalities are not important for the rest of this paper we will refer the reader to \cite{BeCoHu14} for further details.

We will see that the monotonicity principle is the key to the geometric approach to optimal Skorokhod embedding problems.
%
%
%
%
%
%
%
%
%
%
%
%
\section{The Perkins Embedding} \label{s:Perkins}
%
\subsection{Perkins Embedding with Deterministic Starting}
We will first give a precise formulation of the original Perkins solution in a barrier type formulation.
Finding a geometric interpretation of this solution in the case of $\lambda = \delta_{0}$ is feasible with the methods established in \cite{BeCoHu14}, see Theorem 6.8 therein.
 \ \\ 
We will give the following slight reformulation of this theorem in order to stress our interest in the specific structure of the target set.  
In addition to the barriers defined in the introduction we will furthermore consider an \emph{upwards barrier}, 
that is a set $R \seq \R^2$ such that $(s,x) \in R$ implies $(s,y) \in R$ for all $y>x$.

\begin{thm}[The Perkins embedding, cf. \cite{Pe86}] 
\label{Perkins0}
Let $\lambda= \delta_{0}$ and assume $\mu(\{0\})=0$. Let $\varphi: \R^{2}_{+}\rightarrow \R$ be a bounded continuous function which is strictly increasing in both arguments. Then there exists a stopping time $\tau_{P_0}$ which minimizes
\[
    \E\left[\varphi\left(\overline B_{\tau}, -\underline B_{\tau}\right)\right]
\] 
over all solution to \sep \,and which is of the form 
\[
    \tau_{P_0}= \inf \left\{ t \geq 0 : (\overline B_{t}, \underline B_{t}) \in R \right\}.
\]
Here $R \seq \R \times \R_{-}$ will be a vh-barrier which can be represented as $R = R_{1} \cup R_{2}$ with $R_{1}$ being an upwards barrier induced by (v)-lines and $R_{2}$ being an inverse barrier induced by (h)-lines. 
Moreover the boundaries of $R_{1}$ and $R_{2}$ are both given by decreasing functions $\R_{-} \rightarrow \R$ (see Figure \ref{fig:PerkinsTrivial}).
\end{thm}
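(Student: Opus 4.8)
The strategy is to place the problem inside the framework of Section~\ref{s:MultifoldOptimalSEP} with $n=1$. I would take $\gamma((f,s)):=\varphi(\overline f,-\underline f)$, where $\overline f:=\max_{r\le s}f(r)$ and $\underline f:=\min_{r\le s}f(r)$. Since $\varphi$ is continuous and $(f,s)\mapsto(\overline f,\underline f)$ is continuous on $S$, the map $\gamma$ is continuous, hence lsc and Borel; and since $\varphi$ is bounded, the lower estimate \eqref{boundedness} holds with $b_1=c_1=0$ and $(\mathrm{OptSEP}_{\lambda,\mu})$ is well posed. Theorem~\ref{existenceTheorem} then yields a minimizing randomized stopping time, which by the Lemma recalled above (cf.\ \cite{BeCoHu14}) can be realized as a genuine stopping time $\tau_{P_0}$ on $\Omega$ minimizing $\E\left[\varphi(\overline B_\tau,-\underline B_\tau)\right]$ over all solutions of \sep. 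Applying the Monotonicity Principle (Theorem~\ref{monotonicity}) produces a $\gamma$-monotone Borel set $\Gamma\seq S$ carrying the law of $((B_t)_{t\le\tau_{P_0}},\tau_{P_0})$.

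The substance of the proof lies in extracting the geometry of the stopping region from $\gamma$-monotonicity, i.e.\ from $\SG\cap(\Gamma^<\times\Gamma)=\emptyset$. For two stopped paths $(f,s),(g,t)$ with $f(s)=g(t)=x$, concatenating either path with an independent Brownian piece $(B_u)_{u\le\sigma}$ affects the running extrema only through $x$ together with the current pair $(\overline\cdot,\underline\cdot)$; hence both sides of the stop-go inequality depend on $(f,s)$, resp.\ $(g,t)$, only via $(x,\overline f,\underline f)$, resp.\ $(x,\overline g,\underline g)$. Coupling the two continuations with the same $\sigma$ and exploiting that $\varphi$ is strictly increasing in each argument, I would determine precisely which pairs are stop-go pairs: the outcome is governed by the partial order comparing $(\overline f,\underline f)$ with $(\overline g,\underline g)$, and this order changes according to whether $x$ equals the running maximum, equals the running minimum, or lies strictly in between, and degenerates along the diagonal $\{\overline B=-\underline B\}$. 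This trichotomy, together with the reflection at the diagonal, is exactly what separates the violet and hot‑pink families of lines. I expect \textbf{this stop-go analysis --- identifying the governing partial order and its behaviour across the diagonal --- to be the main obstacle}; everything afterwards is bookkeeping with the Monotonicity Principle.

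With the stop-go pairs in hand I would assemble the barrier. Set $R\seq\R\times\R_-$ to be the closure of $\{(\overline f,\underline f):(f,s)\in\Gamma\text{ for some }s\}$, and split $R=R_1\cup R_2$ according to whether the path realising a point of $R$ is at its running minimum or at its running maximum when stopped. From the previous step, $R_1$ is an upwards barrier generated by (v)-lines and $R_2$ an inverse barrier generated by (h)-lines, and a non-monotone stretch of either boundary curve would exhibit an $\SG$-pair inside $\Gamma^<\times\Gamma$; hence both boundaries are decreasing functions $\R_-\to\R$. When $\lambda=\delta_0$ the downward (reflected) segments of the (h)-lines are never actually hit, because $\mu(\{0\})=0$ prevents any embedded path from being stopped at $B_\tau=0$, which is precisely where those segments would be reached; this is why the classical Perkins picture omits them.

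Finally I would identify $\tau_{P_0}$ with $\tau_R:=\inf\{t\ge0:(\overline B_t,\underline B_t)\in R\}$. Since $\Gamma\seq R$, the pair process is in $R$ at time $\tau_{P_0}$, so $\tau_R\le\tau_{P_0}$ almost surely; combined with $B_{\tau_{P_0}}\sim\mu$ and the monotonicity of $\varphi$ in the running extrema, showing that $\tau_R$ still embeds $\mu$ --- a Loynes-type step carried out for analogous functionals in \cite{BeCoHu14} --- forces $\tau_R$ to be a minimizer of the asserted barrier form, which is exactly the content of the theorem; strict monotonicity of $\varphi$ moreover yields $(\overline B_{\tau_R},\underline B_{\tau_R})=(\overline B_{\tau_{P_0}},\underline B_{\tau_{P_0}})$ almost surely.
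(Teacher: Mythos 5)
You should first note that the paper does not actually prove Theorem \ref{Perkins0}; it is recalled as a reformulation of \cite{BeCoHu14}, Theorem 6.8, so the relevant benchmark for your argument is the paper's proof of the general-start analogue, Theorem \ref{thm:Perkins}. Your architecture (existence via Theorem \ref{existenceTheorem}, a $\gamma$-monotone support $\Gamma$ via Theorem \ref{monotonicity}, stop-go analysis, barrier assembly, identification with a hitting time) is the right one, but there is a genuine gap at exactly the step you flag as the main obstacle, and it is created by your choice $n=1$ with the single functional $\gamma=\varphi(\overline f,-\underline f)$. Take the prototypical comparison: $(\tilde f,\tilde s)=(f|_{[0,\tilde s]},\tilde s)$ with $x:=f(\tilde s)=f(s)$, $\underline f_{\tilde s}=\underline f$ and $x<\overline f_{\tilde s}<\overline f$, and choose $\sigma$ to be the exit time of a small interval so that $x+\overline B_\sigma\le\overline f_{\tilde s}$ almost surely. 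Then for additively separable $\varphi$ both sides of the stop-go inequality are exactly equal, and for supermodular $\varphi$ the inequality reverses; since the stop-go condition must hold strictly for \emph{every} admissible $\sigma$, the pair $((\tilde f,\tilde s),(f,s))$ is not in $\SG$ and $\gamma$-monotonicity yields nothing. This is precisely why the proof of Theorem \ref{thm:Perkins} runs a fourfold lexicographic optimization and ``jumps to (SGC$_3$)'' with the tie-breaking functionals $-\varphi(\overline f)f(s)^2$ and $-\varphi(-\underline f)f(s)^2$ whenever the primary conditions degenerate to equalities. You set up the multifold framework and then discarded the feature of it that makes the argument work; with $n=1$ even the first structural claim ($\Gamma$ contains no path stopped strictly between its extrema) is out of reach.

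The closing identification is also incomplete. From $(\overline B_{\tau_{P_0}},\underline B_{\tau_{P_0}})\in R$ you only obtain $\tau_R\le\tau_{P_0}$; the reverse inequality is the substantive half, and ``showing that $\tau_R$ still embeds $\mu$'' is not available at that point --- it is essentially what you are trying to prove. The route used in the proof of Theorem \ref{thm:Perkins} is to introduce closed and open versions $R_{\textrm{CL}}$, $R_{\textrm{OP}}$ of the barrier, prove $\tau_{\textrm{CL}}\le\tau_{P_0}\le\tau_{\textrm{OP}}$ (the upper bound again via stop-go pairs: a going path that has traversed a line segment of $R_{\textrm{OP}}$ forms an $\SG$-pair with the stopped path generating that segment), and then argue $\tau_{\textrm{CL}}=\tau_{\textrm{OP}}$ a.s.\ by properties of Brownian motion. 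Relatedly, taking the topological closure of $\{(\overline f,\underline f):(f,s)\in\Gamma\}$ is the wrong move: closing can strictly enlarge the barrier and decrease $\tau_R$; one keeps the raw analytic sets $r_1,r_2$. Two smaller slips: the degeneracy in your trichotomy occurs at $f(s)=0$, where a path could simultaneously be at a new maximum and a new minimum (excluded by $\mu(\{0\})=0$), not along $\{\overline B=-\underline B\}$; and the downward segments of the (h)-lines are unreachable for $\lambda=\delta_0$ because they sit at running-maximum level $\underline f<0$ while $\overline B_t\ge 0$ for all $t$, not because stopping at $B_\tau=0$ is forbidden.
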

The solution $\tau_{P_0}$ will in fact coincide almost surely for each choice of the auxiliary function $\varphi$ thus giving first order stochastic dominance. 

Unfortunately, some of the arguments in the proof of this theorem do not extend to the random starting case. 
Foremost it is no longer possible to optimize over the running minimum and the running maximum \emph{simultaneously}. 
The stopping rule of Perkins' problem will only stop paths when they reach a new running extremum. 
This justifies the representation of $\tau_{P_0}$ as a hitting time of the process $(\overline B, \underline B)$. 
Note that since we do not allow $\mu$ to hold any mass in $0$, we have $\tau_{P_0}>0$ and by properties of Brownian motion then 
$\overline B_{\tau_{P_0}}> \underline B_{\tau_{P_0}}$ a.s. 
These two facts imply that whenever we consider two paths stopped by this stopping rule at the same terminal value, either their running minima or their running maxima coincide. 
However, now it becomes very easy to decide on which one of those two paths to stop if we look at the other running extremum.

If we now allow for random starting we can also encounter (among other problems) the following situation: Two paths $(f,s), (g,t) \in S$ stop at the same value, that is $f(s)=g(t)$, however - lets say $f$ - does so by reaching a new running minimum and $g$ by reaching a new running maximum. 
Then $\overline f > \overline g$ and $\underline f > \underline g$. 
It is therefore no longer obvious, which of those two paths should be stopped and we can see that the solution can no longer be identified as the hitting time of a set serving both optimization problems simultaneously.
%
%
%
%
%
%
\newpage
\subsection{Perkins Embedding with Random Starting} \label{s:Perkins-RandomStarting}
%
%
\subsubsection*{The Hobson-Pedersen Solution} \label{s:HobsonPedersenSolution}
%
A first take on Perkins' embedding with random starting is due to Hobson and Pedersen in \cite{HoPe02}. 
We will give a brief sketch of their solution.

The authors define a function $g: \R \rightarrow \R$ and a random variable $G$ which is independent of the randomly started Brownian motion $(B_t)$. 
Both are explicitly determined by the measures $\lambda$ and $\mu$. 
They further define the two stopping times
\[
	\tau_G = \inf \left\{t > 0 : \overline B_t \geq G \right\} 
    \text{ and } 
    \tau_g = \inf \left\{t > 0 : B_t \leq g(\overline B_t) \right\},
\]
where one should note the resemblance to the Az\'ema-Yor solution of the second stopping time. 
The solution to the Perkins problem with random starting is then given by
	\[
		\tau_{HP} = \tau_G \wedge \tau_g,
	\]
see Figure \ref{fig:HobsonPedersen} for an illustration.
\begin{figure}
\centering
\includegraphics[width = 0.7\linewidth]{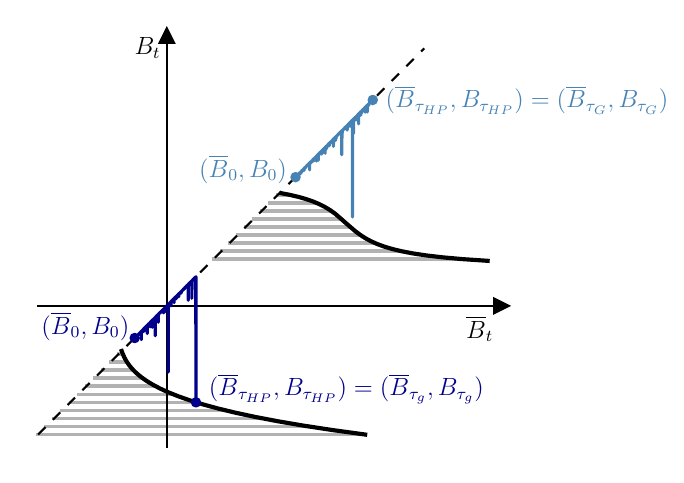}
\caption{The Hobson-Pedersen solution to \sep\, with non-deterministic starting law $\lambda$. 
We see the lower path being stopped by $\tau_g$ due to hitting the barrier and the upper path being stopped by $\tau_G$.}
\label{fig:HobsonPedersen}
\end{figure}
\begin{figure}
\centering
\begin{subfigure}{.45\linewidth}
\centering
\includegraphics[width = 0.8\linewidth]{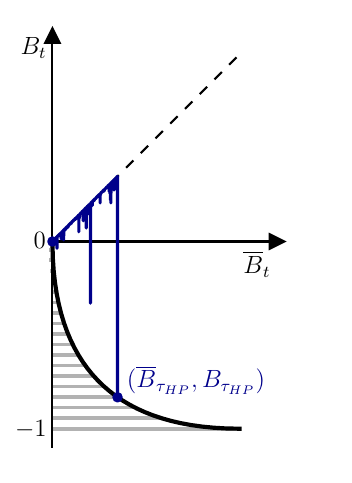}
\end{subfigure}
\quad
\begin{subfigure}{.45\linewidth}
\centering
\includegraphics[width = 0.8\linewidth]{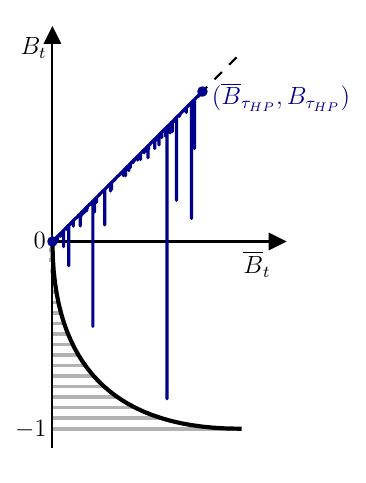}
\end{subfigure}
\caption{The Hobson-Pedersen solution to \sep\, for $\lambda = \delta_0 $ and $\mu = U[-1,1]$ (see Example A in \cite{HoPe02}). 
We see that external randomization is still needed (otherwise paths could only stop in $[-1,0]$) even though the measures $\lambda$ and $\mu$ are mutually singular.}
\label{fig:HP-Deterministic}
\end{figure}
Due to this external randomization given via the random variable $G$ this solution is not adapted to the filtration of the underlying Brownian motion. 
Moreover this external randomization generally remains to be needed in the deterministic case of $\lambda = \delta_0$ as seen in Figure \ref{fig:HP-Deterministic}.
In particular, the original solution by Perkins is not a special case of the Hobson-Pedersen solution.
The authors also discover that simultaneous optimization is no longer possible 
for general initial distributions and choose to prioritize minimizing the law of the running maximum.
%
%
%
\subsubsection*{Existence of a Perkins Embedding Allowing for a General Starting Law}
%
By interpreting Perkins' problem with general starting law as an \optsep \,for a well chosen $\gamma$ 
as introduced in Section \ref{s:MultifoldOptimalSEP} we can use the methods and results therein to prove existence of a solution 
that is given as the hitting time of the process $(\overline B, \underline B)$ of a specifically structured target set.

It was explained above why we can no longer optimize simultaneously over the running minimum and the running maximum. 
We will therefore decide - as in the Hobson-Pederson solution presented above - that from now on the running maximum is more important to us.
However, it should be obvious that all following results and calculations work analogously if our choice fell on the running minimum instead.

The following theorem provides a generalization of Perkins'  theorem in the slightly weaker formulation of optimization in expectation over an auxiliary function $\varphi$. 
While this is needed in order to apply the results of the previous section, the proof of Theorem \ref{thm:maintheorem} can be concluded in the next chapter by showing that all such solutions must coincide almost surely independently of the choice of $\varphi$.
Thus our solution will optimize over all such $\varphi$ which will conclude optimization in first order stochastic dominance. 

We no longer exclude the possibility of $\lambda$ and $\mu$ sharing mass. 
This leads to the situation of some randomization needed at time $0$.

\begin{thm}
\label{thm:Perkins}
Let $\varphi: \R \rightarrow \R$ be a continuous bounded strictly increasing function. 
Then there exists a stopping time $\tilde\tau$ which minimizes $\E\left[\varphi\left(\overline{B}_{\tau}\right)\right]$ over all solutions of $(\mathrm{SEP}_{\lambda, \mu})$ and maximizes $\E\left[\varphi\left(\underline{B}_{\tau}\right)\right]$ over all stopping times satisfying the former.
For $A \in \mathcal B (\R)$ we have 
	\[
		P\left[\tilde\tau = 0, B_0 \in A\right] = (\lambda \wedge \mu) (A)
	\]
and there exists a set $R \in \R^2$ such that on $\{ \tilde\tau > 0 \}$ we have
	\[
		\tilde\tau = \inf \left\{t \geq 0 : \left(\overline B_{t}, \underline B_{t}\right) \in R \right\}.
	\]
\end{thm}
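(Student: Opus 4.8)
The plan is to identify the problem as a twofold optimal Skorokhod embedding problem in the sense of Section~\ref{s:MultifoldOptimalSEP}, to produce an optimizer via Theorem~\ref{existenceTheorem}, and to read off its geometry from the monotonicity principle, Theorem~\ref{monotonicity}, following the strategy used for the deterministic start case in \cite[Theorem~6.8]{BeCoHu14}. \emph{Step 1 (set-up and existence).} Put $n=2$ and define $\gamma=(\gamma_1,\gamma_2)\colon S\to\R^2$ by $\gamma_1((f,s)):=\varphi(\max_{r\le s}f(r))$ and $\gamma_2((f,s)):=-\varphi(\min_{r\le s}f(r))$. Since $\varphi$ is continuous and bounded and the running maximum and minimum are continuous functionals on $S$, both components are continuous and bounded; hence $\gamma$ is lower semicontinuous and satisfies \eqref{boundedness} with $b_j=c_j=0$, and $(\mathrm{OptSEP}_{\lambda,\mu})$ is well posed. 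Theorem~\ref{existenceTheorem} yields a minimizer $\xi\in\mathrm{RST}_\lambda(\mu)$ of $(\mathrm{OptSEP}\star_{\lambda,\mu})$, and by the equivalence lemma (cf.\ \cite[Lemma~3.11]{BeCoHu14}) there is a $\G$-stopping time $\tilde\tau$ on $\Omega$ with $\Phi_{\#}P=\xi$; as this correspondence preserves both objectives, $\tilde\tau$ minimizes $(\mathrm{OptSEP}^{(1)}_{\lambda,\mu})$ and then $(\mathrm{OptSEP}^{(2)}_{\lambda,\mu})$, i.e.\ it minimizes $\E[\varphi(\overline B_\tau)]$ over all solutions of \sep\ and maximizes $\E[\varphi(\underline B_\tau)]$ over those attaining the minimum.

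\emph{Step 2 (stop-go pairs and monotonicity).} By Theorem~\ref{monotonicity} there is a $\gamma$-monotone Borel set $\Gamma\seq S$ carrying $\tilde\tau$, i.e.\ $\textsf{SG}_\gamma\cap(\Gamma^<\times\Gamma)=\emptyset$. The computational heart is the claim that $((f,s),(g,t))\in\textsf{SG}_\gamma$ whenever $f(s)=g(t)=:x$ and either $\overline f<\overline g$, or $\overline f=\overline g$ and $\underline f>\underline g$ (with $\overline f,\underline f$ the running extrema of $f$). Concatenating $f$ with a Brownian continuation run for a stopping time $\sigma$, $\E[\sigma]\in(0,\infty)$, turns $\overline f$ into $\max(\overline f,\,x+\max_{u\le\sigma}B_u)$ and $\underline f$ into $\min(\underline f,\,x+\min_{u\le\sigma}B_u)$, and since $\varphi$ is strictly increasing one has the elementary pointwise estimates $\varphi(\max(b,Y))-\varphi(\max(a,Y))\le\varphi(b)-\varphi(a)$ and $\varphi(\min(b,Z))-\varphi(\min(a,Z))\le\varphi(b)-\varphi(a)$ for $a\le b$, strict on $\{Y>a\}$ resp.\ $\{Z<b\}$ when $a<b$. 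Taking expectations, and noting that for every nontrivial $\sigma$ the relevant event has positive probability (because $x\le\overline f$ and $\underline g\le x$), one obtains $\textsf{SGC}_1$ with equality if $\overline f=\overline g$ and strictly if $\overline f<\overline g$, and, in the case $\overline f=\overline g$, $\textsf{SGC}_2$ strictly when $\underline f>\underline g$; in the lexicographic ordering this is exactly the stop-go inequality. Combining with $\gamma$-monotonicity: for every going path $(f,s)\in\Gamma^<$ and every stopped path $(g,t)\in\Gamma$ with $f(s)=g(t)$ we must have $\overline f>\overline g$, or $\overline f=\overline g$ and $\underline f\le\underline g$.

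\emph{Step 3 (the barrier and the frozen mass).} The constraint just obtained says that, at a given terminal value, the stopping decision depends only on the pair $(\overline B_\cdot,\underline B_\cdot)$ and is monotone along its trajectory, which moves only towards larger $\overline B$ and smaller $\underline B$, one coordinate at a time, staying on or below the diagonal. Following \cite[Theorem~6.8]{BeCoHu14}, I would upgrade this to a genuine geometric statement: there is a Borel set $R\seq\R^2$ of vh-barrier type — a union of an upwards barrier built from vertical (v)-lines and an inverse barrier built from (h)-lines, as in Theorem~\ref{Perkins0} — such that, after removing the paths stopped at time $0$, $\tilde\tau$ agrees almost surely with $\inf\{t\ge0:(\overline B_t,\underline B_t)\in R\}$; the vh-structure of $R$ is precisely what makes this hitting time a well-defined functional of the trajectory $(\overline B_\cdot,\underline B_\cdot)$. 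For the time-$0$ behaviour, the law $\rho$ of $B_0$ on $\{\tilde\tau=0\}$ satisfies $\rho\le\lambda$ and, since there $B_{\tilde\tau}=B_0$, also $\rho\le\mu$, hence $\rho\le\lambda\wedge\mu$; the reverse inequality follows from a stop-go argument comparing the constant path at a value $x$ against a path that has strictly exceeded $x$, which shows that any unfrozen common mass at $x$ would violate $\gamma$-monotonicity. Equivalently one reduces at the outset to mutually singular measures by writing $\lambda=(\lambda\wedge\mu)+\lambda'$, $\mu=(\lambda\wedge\mu)+\mu'$ with $\lambda'\perp\mu'$ and $\lambda'\le_{cx}\mu'$ (the convex-order inequality survives subtraction of the common mass), freezing $\lambda\wedge\mu$ at time $0$ — whose contribution $\int\varphi\,d(\lambda\wedge\mu)$ to each objective is a constant — and applying Steps~1--2 to $(\lambda',\mu')$, where automatically $\tilde\tau>0$ almost surely.

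\emph{Main obstacle.} The genuinely delicate point is Step~3: passing from the pointwise stop-go constraints on the merely Borel set $\Gamma$ to an actual vh-barrier $R$ whose $(\overline B,\underline B)$-hitting time coincides with $\tilde\tau$. The new difficulty relative to the $\lambda=\delta_0$ case is that paths now enter the $(\overline B,\underline B)$-plane at arbitrary diagonal points, and two paths sharing a state $(\overline B_t,\underline B_t)$ need not share the subsequent evolution of this process; one must combine the two-level (running maximum, then running minimum) optimality with the staircase monotonicity of the trajectory and Loynes-type closure arguments of the kind revisited in Section~\ref{s:uniqueness} in order to close $\Gamma$ up into a barrier of the prescribed type.
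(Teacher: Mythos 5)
Your overall strategy (cast the problem as a multifold optimal Skorokhod embedding, get existence from Theorem \ref{existenceTheorem}, read off the geometry from the monotonicity principle) is exactly the paper's, but Step 2 contains a genuine gap: the two-component functional $\gamma=(\gamma_1,\gamma_2)$ does not produce the stop-go pairs you claim. A pair belongs to $\textsf{SG}_\gamma$ only if the strict lexicographic inequality holds for \emph{every} stopping time $\sigma$ with $\E[\sigma]\in(0,\infty)$. Consider $f(s)=g(t)=x$ with $x<\overline f<\overline g$ and $\underline f=\underline g$ --- precisely the configuration needed to rule out paths stopping strictly between their running extrema, where one compares a stopped path with an initial segment of itself. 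If $\sigma$ is the exit time of a sufficiently small interval, then $x+\overline B_\sigma<\overline f$ almost surely, so $\overline f\vee(x+\overline B_\sigma)=\overline f$ and $\overline g\vee(x+\overline B_\sigma)=\overline g$: (SGC$_1$) is an exact equality, and (SGC$_2$) is an identity because $\underline f=\underline g$. Hence no strict inequality holds for that $\sigma$ and the pair is \emph{not} a stop-go pair. Your parenthetical justification (``the relevant event has positive probability because $x\le\overline f$'') is exactly where this breaks down: strictness in your pointwise estimate requires $P[\overline B_\sigma>\overline f-x]>0$, which is guaranteed for all nontrivial $\sigma$ only when $x=\overline f$. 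This is why the paper takes $n=4$ and adds the quadratic tie-breakers $\gamma_3((f,s))=-\varphi(\overline f)\,f(s)^2$ and $\gamma_4((f,s))=-\varphi(-\underline f)\,f(s)^2$: when the continuation does not raise the running maximum, (SGC$_3$) still gives a strict inequality, since $\E[(x+B_\sigma)^2]=x^2+\E[\sigma]>x^2$ and this increase is weighted by $\varphi(\overline f)>\varphi(\overline f_{\tilde s})$. Without such components the key conclusion of your Step 2 --- and therefore everything built on it in Step 3 --- is unsupported.

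A secondary point: Step 3 is where most of the paper's actual work lies (the traversal arguments producing the (v)-lines and the (h)-lines together with their downward reflections $\{y\}\times[y,\infty)$, and the sandwich $\tau_{\textrm{CL}}\le\tilde\tau\le\tau_{\textrm{OP}}$ between the closed and open versions of the target set), and you only outline it while correctly flagging it as the delicate part. That would be acceptable as a plan if Step 2 were sound; as it stands, the choice of $\gamma$ must be corrected first.
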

\begin{proof}
We will consider the function $\gamma= (\gamma_{1}, \gamma_{2}, \gamma_{3}, \gamma_{4}): S \rightarrow \R^{4}$ given by
\begin{align*}
	\gamma_{1}((f,s))	&	:=  \varphi(\overline{f} ),				
\\	\gamma_{2}((f,s))	&	:= -\varphi(\underline{f}),				
\\  \gamma_{3}((f,s))	&	:= -\varphi( \overline f ) f(s)^{2},	
\\	\gamma_{4}((f,s))	&	:= -\varphi( -\underline f) f(s)^{2}	.
\end{align*}
Then all $\gamma_{j}$ are bounded from below in the sense of (\ref{boundedness}) (due to the boundedness of $\varphi$).
Thus Theorem \ref{existenceTheorem} guarantees the existence of a minimizer $\tilde\tau \in \mathrm{RST}_{\lambda}(\mu)$ which by Theorem \ref{monotonicity} is supported by a $\gamma$-monotone Borel set $\Gamma \seq S$.
\ \\
The stop-go conditions amount to the following:
\ \\ Let $((f,s), (g,t)) \in S \times S$ such that $f(s)=g(t)$, then
\begin{align*}
	\E\left[\varphi\left( \overline{f} \vee \left(f(s)+ \overline{B}_{\sigma}\right)\right)\right] + \varphi(\overline{g})
		&\geq \varphi( \overline{f}) + \E\left[\varphi\left( \overline{g} \vee \left(g(s) + \overline{B}_{\sigma}\right) \right)\right], 						\tag{SGC$_{1}$}
		\\ &
\\	\E\left[\varphi\left( \underline{f} \wedge \left(f(s)+ \underline{B}_{\sigma}\right) \right)\right] + \varphi( \underline{g})  
		&\leq \varphi( \underline{f} ) + \E\left[\varphi\left( \underline{g} \wedge \left(g(s) + \underline{B}_{\sigma}\right)\right)\right], 				\tag{SGC$_{2}$}
\end{align*}
\begin{align*}
		\E\left[\varphi\left(\overline{f} \vee \left(f(s) + \overline{B}_{\sigma}\right) \right)
         \left(f(s)+ B_{\sigma}\right)^{2}\right] &+  \varphi(\overline{g})g(t)^{2}  
		\leq
\\   \varphi(\overline{f})f(s)^{2} &+ \E \left[ \varphi\left( \overline{g} \vee \left(g(t) + \overline{B}_{\sigma}\right) \right) \left(g(t)+B_{\sigma}\right)^{2}\right], \tag{SGC$_{3}$}
\\ \E\left[ \varphi\left(-\underline{f} \wedge \left(f(s)+\underline{B}_{\sigma}\right) \right)
     \left(f(s)+ B_{\sigma}\right)^{2}\right] &+  \varphi(-\underline{g})g(t)^{2}  
		\leq
\\   \varphi(-\underline{f})f(s)^{2} &+ \E\left[ \varphi\left(-\underline{g} \wedge \left(g(t)
		+ \underline{B}_{\sigma}\right)\right)(g(t)+B_{\sigma})^{2}\right].  \tag{SGC$_{4}$}
\end{align*}

Let us first consider $\{ \tilde\tau > 0 \}$. 
By standard properties of Brownian motion we then may assume that for $(f,s) \in \Gamma$ we have $\overline f > \underline f$.

To legitimate that on $ \{ \tilde\tau > 0 \}$ the stopping time $\tilde\tau$ is given as the hitting time of the process $(\overline B, \underline B)$ we will start by showing that $\tilde \tau$ will only stop Brownian motion when it is reaching a new running minimum or running maximum.

Consider a path which stops somewhere between its current running extrema, that is a path $(f,s)\in S$ such that $\underline{f}< f(s)<\overline{f}$. Let $r$ be the time where $f$ hits its last new extremum. 
As $f$ does not reach a new extremum at time $s$ we can consider the initial segment of this path up to a time point $\tilde s < r$ such that $f(\tilde s)=f(s)$ and either $\underline f_{\tilde s}= \underline f$ or $\overline f_{\tilde s} = \overline f$, where
\[
		\overline{f}_{\tilde{s}}		 =\max_{u \leq \tilde{s}}f ( u )  \quad \text{ and} \quad		\underline{f}_{\tilde{s}}	 =\min_{u \leq \tilde{s}}f ( u ). 
\]
\ \\ Let $ \tilde f := f | _{[0, \tilde s]}$, then we claim that $((\tilde f, \tilde s),(f,s)) \in \SG $.
\ \\ \\ 1. Case: $\underline{f}_{\tilde s}=\underline{f}$ and $\overline{f}_{\tilde s}<\overline{f}$ as depicted in Figure \ref{fig:StopGo-max}, that is the last new extremum hit was a new maximum.
		\ \\ Assume $\overline{f}_{\tilde s} < f(\tilde s)+ \overline{B}_{\sigma}$, then $\overline{f}_{\tilde s} \vee \left( f(\tilde s)+ \overline{B}_{\sigma}\right)= f(\tilde s) + \overline B _{\sigma}$ and (SGC$_{1}$) reads
\begin{align*}
				\E\left[\varphi\left( f(\tilde s) + \overline{B}_{\sigma}\right)\right] + \varphi(\overline{f}) &\geq \varphi(\overline{f}_{\tilde s}) + \varphi( \overline{f} ) &\text{ if }\quad \overline{f} \geq f(s) + \overline{B}_{\sigma} 
		\\		\E\left[\varphi\left( f(\tilde s) + \overline{B}_{\sigma} \right)\right] + \varphi( \overline{f} ) &\geq \varphi( \overline{f}_{\tilde s} ) + \E\left[\varphi\left(f(s) + \overline{B}_{\sigma}\right)\right] & \text{ if }\quad \overline{f} < f(s) + \overline{B}_{\sigma}  
	\end{align*}
	and we see that in both cases a strict inequality holds due to our assumptions.  
		\ \\ On the other hand, if $\overline{f}_{\tilde s}\geq f(\tilde s) + \overline{B}_{\sigma}$, then $\overline{f}_{\tilde s} \vee \left(f(\tilde s)+ \overline{B}_{\sigma}\right) = \overline{f}_{\tilde s}$ as well as $\overline{f} \vee \left(f(s) + \overline{B}_{\sigma}\right)= \overline{f}$ and this leads to an equality in (SGC$_{1}$). 
Since $\underline{f}_{\tilde s}=\underline{f}$ we also have an equality in  (SGC$_{2}$) and therefore jump to our third condition (SGC$_{3}$):
\[
	\E\left[ \varphi(\overline{f}_{\tilde s})(f(\tilde s)+B_{\sigma})^{2}\right]+  \varphi(\overline{f})f(s)^{2} \leq  \varphi(\overline{f}_{\tilde s})f(\tilde s)^{2} +\E\left[ \varphi(\overline{f})(f(s)+B_{\sigma})^{2}\right]
\]
Again, due to our assumptions $f(\tilde s)=f(s)$, $\overline{f}_{\tilde s}<\overline{f}$ and as $\varphi$ is strictly increasing, a strict inequality holds and thus  $((\tilde f,\tilde s), (f,s)) \in \SG$.
\begin{figure}
\centering
\includegraphics[width = 0.8\linewidth]{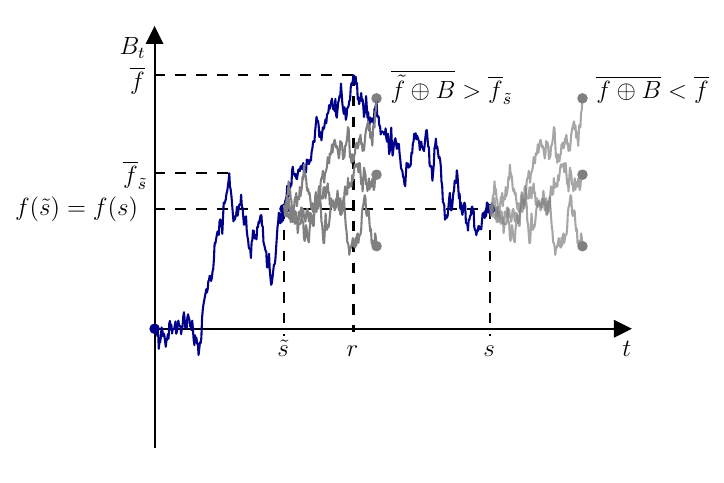}
\caption{Stop-Go pairs: We see possible continuations of our path $f$. 
If these continuations are attached at time $s$ the running maximum of $f$ remains unchanged. 
Attached at time $\tilde s$ however, the running maximum of $(\tilde f, \tilde s) = (f |_{[0, \tilde s]}, \tilde s)$ will be increased.}
\label{fig:StopGo-max}
\end{figure}

\ \\2. Case: $\overline{f}_{\tilde s}=\overline{f}$ and $\underline{f}_{\tilde s}>\underline{f}$, that is the last new extremum hit was a new minimum. Now (SGC$_{1}$) as well as (SGC$_{3}$) will always exhibit an equality and conditions (SGC$_{2}$) and (SGC$_{3}$) can be treated analogously to the previous case.
\ \\ \\
As now due to $\gamma$-monotonicity $((\tilde f, \tilde s), (f,s)) \not \in \Gamma^{<} \times \Gamma$ it follows that $\Gamma \cap  \left\{ (f,s) \in S : \underline{f}< f(s)<\overline{f} \right\} = \emptyset $, that is, when stopped we will almost surely have reached a new running minimum or a new running maximum.

Summing up, we now know that $\tilde\tau$ stops paths of Brownian motion only when they reach a new running minimum or a new running maximum.

We will denote the set of stopped paths satisfying this condition by 
\[
    \tilde S := \left\{ (f,s) \in S : f(s)= \underline f \text{ or } f(s)=\overline f \right\}.
\]
This justifies to consider the phase space $(\overline B, \underline B)$. 
All possible paths will lie below the diagonal, i.e.\ in the set $H_D:= \left\{(x,y) \in \mathbb{R}^2 : x \geq y \right\}$.

\ \\ We propose that there are two sets of points $r_{1}, r_{2} \seq H_D$, such that $R = R_{1} \cup R_{2}$, where
\[
R_{1}:= \big{\{} \{x \} \times [y,x] : (x,y) \in r_{1} \big{\}} 
\text{, and }
R_{2}:= \big{\{} \big{(}[y,x] \times \{y \} \big{)} \cup \big{(} \{y \} \times [y,\infty) \big{)}: (x,y) \in r_{2} \big{\}}.
\] 

Let us legitimate this target set structure:
\ \\ Assume we know that there is a path $(f,s) \in \Gamma $ such that $s>0$ and $f(s)= \overline f$, that is we stop at a new running maximum. We claim that it is impossible for a trajectory of the process $(\overline B, \underline B)$ to \emph{traverse}  the $\{ \overline f\} \times [\underline f, \overline f )$ line-segment and then be stopped as seen in Figure \ref{fig:StopGo} (A).

Consider a path $(g,t) \in \tilde S$ such that $g(0) \in (\underline f, \overline f]$, $\overline g \geq \overline f$ and $\underline g > \underline f$. Then there has to exist a timepoint  $\tilde t \leq t$ such that $g(\tilde t)= \overline g _{\tilde t} = \overline f=f(s)$ and note that still $\underline g _{\tilde t} > \underline f$ has to hold. However, this situation equals the 1.\ case of the above discussion, hence again $((g, \tilde t), (f,s)) \in \SG$. By $\gamma$-monotonicity it now follows that $(g, \tilde t) \not\in \Gamma^{<}$, therefore $(g,t) \not\in \Gamma$ and our claim is proven. 
\ \\ It becomes clear that by choosing $r_{1}:= \left\{(\overline f, \underline f) : (f,s) \in \Gamma \text{ and } f(s)= \overline f \right\} $ the definition of $R_{1}$ as above is reasonable.
\begin{figure}
\centering
\begin{subfigure}{.45\linewidth}
\centering
\includegraphics[width = 1\linewidth]{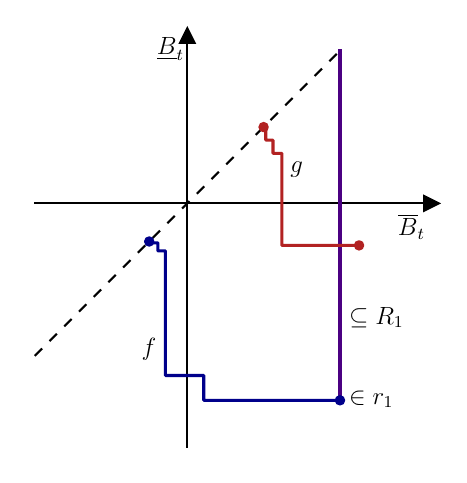}
\caption{The path $(f,s)$ is stopped at a new running maximum.}
\end{subfigure}
\quad
\begin{subfigure}{.45\linewidth}
\centering
\includegraphics[width = 1\linewidth]{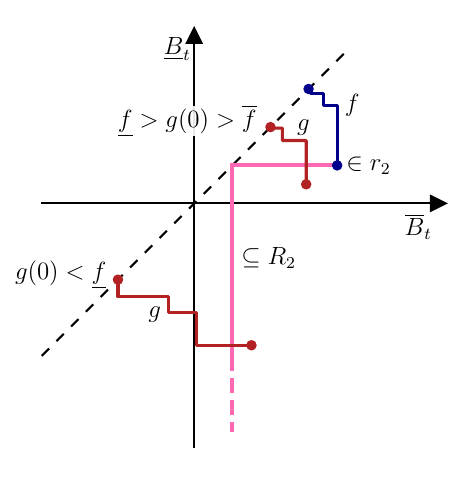}
\caption{The path $(f,s)$ is stopped at a new running minimum.}
\end{subfigure}
\caption{}
\label{fig:StopGo}
\end{figure}
\medskip
Now assume we know that there is a path $(f,s) \in \Gamma $ such that $s>0$ and $f(s)= \underline f$, that is we stop at a new running minimum. We claim that it is impossible for a trajectory of the process $(\overline B, \underline B)$ to \emph{traverse} either the $[\underline f, \overline f ) \times \{ \underline f\} $  or the $\{ \underline f\} \times [\underline f, \infty)$ line-segment and then be stopped as seen in Figure \ref{fig:StopGo} (B).

Consider a path $(g,t) \in \tilde S$ such that $ \overline g \in [\underline f, \overline f)$ and $\underline g \leq \underline f$. Let us first assume $g(0) \in [\underline f, \overline f)$.
Then again there has to exist a time point $\tilde t \leq t$ such that $g(\tilde t)=\underline g _{\tilde t} = \underline f =f(s) $ while $\overline g _{\tilde t} < \overline f$. Again $((g, \tilde t), (f,s)) \in \SG$ as in the 1.\ case above. 

Now assume $g(0) < \underline f$. It is then possible to find a timepoint $\tilde t < t$ such that $g(\tilde t) = \overline g _{\tilde t} = \underline f= f(s)$ and still $\overline g _{\tilde t}< \overline f$. 
We remember that (SGC$_{1}$) exhibits an equality if $\overline g _{\tilde t} \geq g(\tilde t) + \overline B _{\sigma}$, which in our setting is equivalent to $\overline B_{\sigma}=0$.
If on the other hand $\overline B_{\sigma}>0$, we will have a strict inequality.
However as trivial stopping times are excluded, the later will always happen with positive probability and thus taking the expectation (SGC$_{1}$) will always lead to a strict inequality, implying that $((g, \tilde t ), (f,s)) \in \SG$.

This concludes the proof of $(g,t) \not \in \Gamma$.
\ \\ Again, in analogy to the above let us take  $r_{2}:= \left\{(\overline f, \underline f) : (f,s) \in \Gamma \text{ and } f(s) = \underline f \right\}$ to justify the definition of $R_{2}$.

 \medskip
Now define the following two target sets
\begin{align*}
	R_{\textrm{CL}}	&:= R= R_{1} \cup R_{2},
\\	R_{\textrm{OP}}	&:= \big{\{} \{x \} \times (y,x] : (x,y) \in r_{1} \big{\}} 
 \cup \big{\{}\big{(}[y,x) \times \{y \} \big{)} \cup \big{(} \{y \} \times [y,\infty) \big{)} : (x,y) \in r_{2} \big{\}}.
\end{align*}
\noindent and consider
\[
    \tau_{\textrm{CL}}:= \inf \left\{t \geq 0 : (\overline B_{t}, \underline B_{t}) \in R_{\textrm{CL}} \right\} 
        \leq 
    \tau_{\textrm{OP}}:= \inf \left\{t \geq 0 : (\overline B_{t}, \underline B_{t}) \in R_{\textrm{OP}} \right\}.
\]
Note that since $\Gamma$ is Borel, the sets $r_1$ and $r_2$ are analytic sets since they are continuous images of Borel sets. 
This implies that $R_1$ and $R_2$ are analytic sets and we see that $\tau_{\textrm{CL}}$ and $\tau_{\textrm{OP}}$ are stopping times.

We would like to show that $\tau_{\textrm{CL}} = \tilde \tau =\tau_{\textrm{OP}}$ a.s.\ as our claim then follows.
\ \\
As $\Gamma \cap \left\{(f,s) \in S: \underline f < f(s) < \overline f \right\} = \emptyset $ it is obvious that $\tau_{\textrm{CL}}\leq \tilde \tau$ a.s.\  by definition of $\tau_{\textrm{CL}}$.

\medskip
To show that $\tilde \tau \leq \tau_{\textrm{OP}}$ a.s.\ let us assume that this is not the case. 
Choose 
$\omega \in \Omega$ such that $((B_{t}(\omega))_{t \leq \tilde \tau (\omega)}, \tilde \tau (\omega)) \in \Gamma$ 
and assume 
$\tau_{\textrm{OP}}(\omega) < \tilde \tau (\omega)$. 
Then there has to exist an $s \in \left[\tau_{\textrm{OP}}(\omega), \tilde \tau (\omega)\right)$ such that for $f=\left(B_{t}(\omega)\right)_{t\leq s}$ we have $(\overline f, \underline f) \in R_{\textrm{OP}}$. 
As $s \leq \tilde \tau (\omega)$ it follows, that $(f,s) \in \Gamma^{<}$, that is $(f,s)$ is a going path. 
By definition of $\tau_{\textrm{OP}}$ we can find find a point $(x,y) \in r_{1}$ such that $(\overline f, \underline f) \in \{ x\} \times (y, x]$ or a point $(x,y) \in r_{2}$ such that $(\overline f, \underline f) \in\big{(}[y,x) \times \{y \} \big{)} \cup \big{(} \{y \} \times [y,\infty) \big{)}$.
 However, we then find ourselves in the same situation of traversing line-segments as above. 
More precisely, by considering the path $(g,t) \in \Gamma$ corresponding to this $r_{1}$ respectively $r_{2}$ point we again find a $\textsf{SG}$-pair contradicting the $\gamma$-monotonicity of $\Gamma$. 
Hence $\tilde \tau \leq \tau_{OP}$ a.s.
\ \\ \indent
By standard properties of Brownian motion now $\tau_{\textrm{OP}}=\tau_{\textrm{CL}}$ a.s.\ which concludes the proof in the $\{ \tilde \tau > 0\}$ case.
\ \\
Let us now consider stopping in time $0$. 
Note that $P\left[\tilde \tau = 0, B_0 \in A\right] \leq (\lambda \wedge \mu) (A)$. 
We want to show that a strict inequality stands in conflict with the $\gamma$-monotonicity.
If $P\left[\tilde \tau = 0, B_0 \in A\right] < (\lambda \wedge \mu) (A)$ then there has to exists some $x \in A$ such that there are paths in $\Gamma$ starting in $x$ but not immediately stopping and also paths which stop in $x \in A$ at a strictly positive time. 
By above discussion we see that this would constitute $\textsf{SG}$-pairs and we can conclude 
	\[
		P\left[\tilde \tau = 0, B_0 \in A\right] = (\lambda \wedge \mu) (A). 
	\] \qedhere
\end{proof}
%
%
%
%
%
%
%
%
%
%
\section{Uniqueness} \label{s:uniqueness}
%
In the previous section we have seen that the Perkins solution with random starting is given as a hitting time of the process $(\overline B, \underline B)$ of a specifically structured target set. 
As mentioned in the introduction it was proved by Loynes \cite{Lo70} that Root's barrier type solution is essentially unique. 
In this chapter we will briefly discuss the extension of this argument to \emph{barrier type solutions} of the form
		\[
			\tau_R = \inf \left\{t \geq 0 : (A_t, B_t) \in R \right\}
		\]
for sufficiently regular processes $A_t$.

We will then propose a setting in which the Perkins solution with random starting 
can also be seen as a barrier type solutions and show how the Loynes argument extends to this setting. 
This will enable us to conclude the proof of Theorem \ref{thm:maintheorem}.

\subsection{The Loynes Uniqueness Result}
Root initially defined barriers as topologically \emph{closed} subsets of $\R$. 
Loynes uniqueness argument relies on this fact by using that we are actually inside the barrier when we stop.
A suitable generalization for our purposes would be to ask our process $A_t$ to be sufficiently regular such that $(A_t,B_t)$ is jointly Markov satisfying the Blumenthal-Getoor 0-1-law. 
Instead of topological closures we then consider \emph{fine closures} with respect to the process $(A_t,B_t)$.

The \emph{fine closure} of a set $R \seq \R^2$ with respect to a jointly Markov process $(A_t, B_t)$ will be denoted by 
$R^{*}$ and is defined as
\[ 
    R^{*}:=R \cup \left\{(t,x)\in \R^{2} : P\left[\tau_{R} =0\big{|} (A_{0},B_{0})=(t,x)\right]=1 \right\}. 
\]
By this definition follows $\tau_{R}=\tau_{R^{*}}$ a.s. and $(A_{\tau_{R^{*}}},B_{\tau_{R^{*}}}) \in R^{*}$ by Blumenthal-Getoor. 
Moreover we also have $A_{\tau_{R}} \sim A_{\tau_{R^{*}}}$ and $B_{\tau_{R}} \sim B_{\tau_{R^{*}}}$.
\ \\ For details on fine closures refer to \cite{ChWa05}, see also the arguments in \cite{BeCoHu14}.
\ \\ We see that taking fine closures does not alter the stopping properties and we will therefore assume without loss of generality that our barriers are always finely closed with respect to $(A_t,B_t)$.

\begin{prp} \label{prp}
Let $R$ and $S$ be two barriers such that  
	\[
		\tau_{R}:= \inf \left\{t \geq 0 : (A_{t},B_{t})\in R \right\}  \text{ and }  
        \tau_{S}:= \inf \left\{t \geq 0 : (A_{t},B_{t})\in S \right\}
	\]
 are stopping times both generating the same law $\mu$.
Then $R \cup S$ also generates $\mu$ and the corresponding stopping time is given by \[\tau_{R \cup S}= \tau_{R} \wedge \tau_{S}.\]
\end{prp}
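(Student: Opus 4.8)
The plan is to adapt the classical Loynes argument, which hinges on the observation that $\tau_{R\wedge S}:=\tau_R\wedge\tau_S$ is the hitting time of $R\cup S$, and that stopping at $R\cup S$ can only occur "earlier" than stopping at $R$ alone, so the law $B_{\tau_{R\cup S}}$ is in some sense "between" $\mu$ and itself, forcing equality. Concretely, I would first record the trivial identity $\tau_{R\cup S}=\tau_R\wedge\tau_S$, which is immediate from the definition of a hitting time of a union; in particular $\tau_{R\cup S}\le\tau_R$ and $\tau_{R\cup S}\le\tau_S$ almost surely. The goal is then to show $(A_{\tau_{R\cup S}},B_{\tau_{R\cup S}})$ has the same law in the $B$-coordinate as $B_{\tau_R}\sim\mu$.

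The key step is a potential-theoretic / convex-order comparison. Since $R$ and $S$ are barriers of the relevant type and $\tau_R,\tau_S$ both embed $\mu$ with $\E[\tau]<\infty$, the stopped process $(B_{t\wedge\tau_R})$ is a uniformly integrable martingale with terminal law $\mu$, and likewise for $\tau_S$. Because $\tau_{R\cup S}\le\tau_R$, the law $\nu:=\mathrm{Law}(B_{\tau_{R\cup S}})$ lies in convex order between $\lambda$ and $\mu$, i.e. $\lambda\preceq_c\nu\preceq_c\mu$; the same holds comparing with $\tau_S$. Running the Brownian motion from time $\tau_{R\cup S}$ onward until it next hits $R$ (respectively $S$) then transports $\nu$ to $\mu$ in both cases. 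The heart of the matter is to argue that on the event $\{\tau_{R\cup S}<\tau_R\}$ we have stopped inside $S$ (by finite closedness and Blumenthal–Getoor, $(A_{\tau_{R\cup S}},B_{\tau_{R\cup S}})\in R^{*}\cup S^{*}$), so that the "remaining" mass that has not yet been placed according to $R$ has been placed according to $S$, and symmetrically. A standard argument — comparing the residual expected hitting times, or equivalently comparing $1$-potentials $U\nu$ with $U\mu$ on both sides — yields $\nu=\mu$, using that a measure which is $\preceq_c\mu$ and from which $\mu$ is reachable by a UI martingale embedding into a barrier must equal $\mu$ when the barrier is finely closed. Once $\nu=\mu$, minimality of the hitting time forces $\tau_{R\cup S}=\tau_R=\tau_S$ a.s.\ as well, but for the statement it suffices that $R\cup S$ generates $\mu$ via $\tau_R\wedge\tau_S$.

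The main obstacle I anticipate is making the "residual mass is placed by the other barrier" step rigorous without circularity: one must be careful that stopping at $\tau_{R\cup S}$ and then continuing does not double-count, and that the fine-closure properties (so that we really are inside $R\cup S$ when we stop, and that $A_{\tau_R}\sim A_{\tau_{R\cup S}}$ is not needed, only $B_{\tau_R}\sim\mu$) are invoked correctly for the joint Markov process $(A_t,B_t)$ satisfying the Blumenthal–Getoor $0$–$1$ law. This is exactly where the hypothesis "$A_t$ sufficiently regular, $(A_t,B_t)$ jointly Markov, finely closed barriers" is consumed. I would handle it by the classical Loynes bookkeeping: decompose $\Omega$ according to whether $\tau_R\le\tau_S$ or not, and on each piece show the $B$-marginal contribution matches, using that barriers are absorbing in the $A$-coordinate in the appropriate fine sense. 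The convex-order / potential comparison itself is then routine given the UI martingale structure and $\E[\tau]<\infty$.
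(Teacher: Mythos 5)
You have the right overall strategy (a Loynes-type argument using fine closures and a decomposition of $\Omega$), but the crucial device that makes the argument close is missing. The paper's proof hinges on the level set $K:=\{z\in\R:\inf\{y:(y,z)\in R\}<\inf\{y:(y,z)\in S\}\}$, i.e.\ the set of levels at which the barrier $R$ ``starts before'' $S$. The barrier property then gives the inclusions $S_K\subseteq R_K$ and $R_{K^c}\subseteq S_{K^c}$, so that stopping by $\tau_S$ at a level in $K$ forces $\tau_R\le\tau_S$ and rules out $B_{\tau_R}\in K^c$; combining this with the mass balance $P[B_{\tau_R}\in K]=P[B_{\tau_S}\in K]$ (both laws equal $\mu$) kills both cross events $\{B_{\tau_R}\in K, B_{\tau_S}\in K^c\}$ and $\{B_{\tau_S}\in K, B_{\tau_R}\in K^c\}$. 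This is what identifies the two pieces of the decomposition as $\mu|_K$ and $\mu|_{K^c}$ and yields that $\tau_R\wedge\tau_S$ embeds $\mu$. Your substitute decomposition ``according to whether $\tau_R\le\tau_S$'' does not by itself tell you what law each piece carries; without $K$ you cannot match the marginal contributions.

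The alternative route you sketch via convex order and potentials does not work as stated. The claim that ``a measure which is dominated by $\mu$ in convex order and from which $\mu$ is reachable by a UI martingale embedding into a finely closed barrier must equal $\mu$'' is false: the starting law $\lambda$ itself satisfies both conditions (with the barrier $R$) and is generally different from $\mu$. All one gets from $\tau_{R\cup S}\le\tau_R$ is that $\nu:=\mathrm{Law}(B_{\tau_{R\cup S}})$ precedes $\mu$ in convex order, and a one-sided potential inequality cannot be upgraded to equality without additional input --- which is exactly what the $K$-decomposition supplies. (Also note that the a.s.\ identity $\tau_R=\tau_S$ you mention at the end is the content of the subsequent corollary, via $\E[\tau]=\E[B_\tau^2]$ and minimality; it is not needed for, and does not follow directly from, the proposition itself.)
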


\begin{proof}
\ Consider the set 
\[K:= \left\{z \in \R: \inf \left\{y \in \R : (y,z)\in R \right\} < \inf \left\{y \in \R : (y,z)\in S \right\}  \right\}, \]
and define 
\begin{align*}
	R_{K}:=\left\{(x,y) \in R : y \in K  \right\}, \quad R_{K^{c}}:=\left\{(x,y) \in R : y \in K^{c}  \right\},
\\ 	S_{K}:= \left\{(x,y) \in S : y \in K \right\}, \quad S_{K^{c}}:= \left\{(x,y) \in S : y \in K^{c} \right\}. 
\end{align*}

Note that $S_{K} \seq R_{K}$ as well as $R_{K^{c}} \seq S_{K^{c}}$.

Now assume that $B_{\tau_{S}}\in K$. 
By definition of $\tau_{S}$ and by the above discussion we have $(A_{\tau_{S}},B_{\tau_{S}})\in S_{K} \seq R_{K}$. 
This means, $\tau_{R} \leq \tau_{S}$ and  it is now impossible to have $(A_{\tau_{R}},B_{\tau_{R}})\in R_{K^c}$ (otherwise $B_{\tau_{S}}$ would have stopped in $K^{c}$). 
This implies that we cannot have $B_{\tau_{R}} \in K^{c}$ and therefore $P[B_{\tau_{S}} \in K, B_{\tau_{R}}\in K^{c}]=0$.
\ \\
As $B_{\tau_{R}} \sim B_{\tau_{S}}$ we have:
\begin{align*}
    P\left[B_{\tau_{R}}\in K \right]	
        &= P\left[B_{\tau_{R}}\in K, B_{\tau_{S}}\in K \right] + P\left[B_{\tau_{R}}\in K, B_{\tau_{S}}\in K^{c} \right]
\\ 		&= P\left[B_{\tau_{S}}\in K, B_{\tau_{R}}\in K \right] + P\left[B_{\tau_{S}}\in K, B_{\tau_{R}}\in K^{c} \right] 
         = P \left[B_{\tau_{S}}\in K \right],
\end{align*}
and altogether 
\[
    0 = P\left[B_{\tau_{R}}\in K, B_{\tau_{S}}\in K^{c}\right]
      = P\left[B_{\tau_{S}}\in K, B_{\tau_{R}}\in K^{c}\right].
\]
It is now clear that the sets
\[
    \Omega_{1} :=\left\{B_{\tau_{R}} \in K, B_{\tau_{S}} \in K \right\} \,\, \text{ where }\,\, \tau_{R}\leq \tau_{S} 
\]
and
\[
    \Omega_{2}:=\left\{B_{\tau_{R}} \in K^{c}, B_{\tau_{S}} \in K^{c} \right\} \,\, \text{ where }\,\, \tau_{S}\leq \tau_{R}
\]
are essentially disjoint and their union has full probability. 
Therefore $\tau_{R \cup S}= \tau_{R} \wedge \tau_{S}$ a.s.
\ \\
That $\tau_{R \cup S}$ generates the same law is now obvious (due to decomposition into $\Omega_{1}$ and $\Omega_{2}$).
	
\end{proof}

\begin{cor} \label{cor}
	If $\tau$ is a barrier type solution to the (SEP$_{\lambda, \mu}$), then $\tau$ is a.s. unique.
\end{cor}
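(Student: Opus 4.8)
The plan is to obtain this as a direct consequence of Proposition \ref{prp} together with a standard martingale argument. Let $\tau_R$ and $\tau_S$ be two barrier type solutions to $(\mathrm{SEP}_{\lambda,\mu})$, with associated barriers $R$ and $S$ (with respect to the same regular process $A_t$); as discussed above we may assume both $R$ and $S$ are finely closed with respect to $(A_t,B_t)$, which does not change the induced stopping times. Since $\tau_R$ and $\tau_S$ both embed $\mu$, Proposition \ref{prp} applies and yields that $R\cup S$ also embeds $\mu$, with $\tau_{R\cup S}=\tau_R\wedge\tau_S$ almost surely. In particular $\tau_{R\cup S}\le\tau_R$ and $\tau_{R\cup S}\le\tau_S$, and $\tau_{R\cup S}$ again solves $(\mathrm{SEP}_{\lambda,\mu})$.

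Next I would compare $\tau_{R\cup S}$ with $\tau_R$. Because $\E[\tau_R]<\infty$ and $B_t^2-t$ is a martingale, we have $\E[B_{t\wedge\tau_R}^2]=\E[B_0^2]+\E[t\wedge\tau_R]\le\E[B_0^2]+\E[\tau_R]<\infty$, so the stopped process $(B_{t\wedge\tau_R})_{t\ge0}$ is bounded in $L^2$, hence a uniformly integrable martingale; by Doob's inequality $\E[\overline B_{\tau_R}^2]\le 4\,\E[B_{\tau_R}^2]<\infty$. Optional sampling then gives $B_{\tau_{R\cup S}}=\E[B_{\tau_R}\mid\G_{\tau_{R\cup S}}]$, and orthogonality of martingale increments yields
\[
    \E[B_{\tau_R}^2]=\E[B_{\tau_{R\cup S}}^2]+\E\big[(B_{\tau_R}-B_{\tau_{R\cup S}})^2\big].
\]
Since $B_{\tau_{R\cup S}}\sim\mu\sim B_{\tau_R}$ and $\mu$ has finite second moment, the two outer terms agree, so $B_{\tau_R}=B_{\tau_{R\cup S}}$ almost surely. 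As Brownian motion almost surely does not remain constant on any nondegenerate time interval and $\tau_{R\cup S}\le\tau_R$, this forces $\tau_{R\cup S}=\tau_R$ a.s. Running the identical argument with $S$ in place of $R$ gives $\tau_{R\cup S}=\tau_S$ a.s., and therefore $\tau_R=\tau_S$ a.s.

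I expect the only genuinely delicate point to be the justification that $(B_{t\wedge\tau_R})$ is uniformly integrable, i.e.\ that optional sampling is applicable; this is precisely where the finiteness of $\E[\tau_R]$ (rather than mere a.s.\ finiteness of $\tau_R$) and the finite second moment of $\mu$ are used. Everything else is bookkeeping: once $B_\sigma=B_\tau$ a.s.\ for the pair $\sigma=\tau_{R\cup S}\le\tau=\tau_R$, the conclusion $\sigma=\tau$ a.s.\ is standard. Note also that the same reasoning simultaneously shows that a barrier type solution, once it exists, is characterized among all solutions to $(\mathrm{SEP}_{\lambda,\mu})$ by its barrier up to the fine closure.
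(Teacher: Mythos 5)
Your overall strategy is the same as the paper's: invoke Proposition \ref{prp} to get that $\tau_{R\cup S}=\tau_R\wedge\tau_S$ is again a solution to \sep, and then argue that a solution dominated by another solution must coincide with it a.s.\ because both embed the same law $\mu$ with finite second moment. The paper carries out this ``minimality'' step directly: $\E[\tilde\tau]=\E[B_{\tilde\tau}^2]=\E[B_{\tau}^2]=\E[\tau]$ and $\tilde\tau\le\tau$ force $\tilde\tau=\tau$ a.s. Your detour through the orthogonality of martingale increments is fine up to the identity $\E[B_{\tau_R}^2]=\E[B_{\tau_{R\cup S}}^2]+\E[(B_{\tau_R}-B_{\tau_{R\cup S}})^2]$ and the conclusion $B_{\tau_{R\cup S}}=B_{\tau_R}$ a.s., but the final justification is not correct as stated: knowing that $B_{\sigma}=B_{\tau}$ a.s.\ for stopping times $\sigma\le\tau$ does \emph{not} mean that the path is constant on $[\sigma,\tau]$ --- Brownian motion can leave a level and return to it, so non-constancy of paths on nondegenerate intervals does not by itself give $\sigma=\tau$. (Indeed, without integrability the implication is false: take $\sigma=0$ and $\tau=\inf\{t\ge 1: B_t=B_0\}$.) The repair is one line and uses exactly the identity you already have in hand: by optional sampling for $B_t^2-t$, $\E[(B_{\tau_R}-B_{\tau_{R\cup S}})^2]=\E[\tau_R-\tau_{R\cup S}]$, so the vanishing of the left-hand side gives $\tau_R=\tau_{R\cup S}$ a.s.\ directly. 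With that substitution your argument is complete and coincides in substance with the paper's.
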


\begin{proof}
For any solution $\tau$ to (SEP$_{\lambda, \mu}$) we have $\E[\tau] < \infty$ and thus $\E[B_{\tau}^2] = \E[\tau]$. 
Now if $\tilde \tau$ is another solution to (SEP$_{\lambda, \mu}$) and $\tilde \tau \leq \tau$, then 
$\E[\tilde \tau] = \E[B_{\tilde \tau}^2] = \E[B_{\tau}^2] = \E[\tau]$, hence $\tau = \tilde \tau$ a.s. and we call such a solution \emph{minimal}. 
Let $R$ be the barrier inducing the solution $\tau = \tau_R$ and let $\tau_S$ be a different solution induced by the barrier $S$. 
By Proposition \ref{prp} we have that $\tau_R \wedge \tau_S$ is also a solution to the (SEP$_{\lambda, \mu}$). 
Since trivially $\tau_R \wedge \tau_S \leq \tau_R$ we have that $\tau_R \wedge \tau_S = \tau_R$ due to the minimality observation before. 
Now analogously $\tau_R \wedge \tau_S = \tau_S$, hence $\tau_R=\tau_S$ a.s.\ concluding the proof of uniqueness.
\end{proof}

We see that this uniqueness result cannot immediately be applied to the solutions found in Theorem \ref{thm:Perkins}. 
However, an analogous uniqueness result holds in this setting. 
The target set constructed in the proof of Theorem \ref{thm:Perkins} can be seen as an inverse barrier in the sense of Rost and the Loynes type uniqueness result can be extended to this setting.
%
%
%
%
%
%
\subsection{Uniqueness of the Perkins Solution}
%
To give a Loynes type uniqueness result for the Perkins embedding we need to identify the right space and setting to be able to consider vh-barriers as barriers in the classic sense. 
We aim to represent vh-barriers as \emph{inverse} barriers in the Rost sense. 

Let $(\mathrm R', \preceq)$ be a disjoint copy of the real numbers where the order is flipped, 
i.e.\  for $x,y \in \mathrm R'$ we have $x \preceq y$ if and only if $x \geq y$ in the usual order of the real numbers. 
We keep $\mathrm R'$ distinguishable from $\mathbb{R}$. 
Consider $\mathrm D:= \mathrm R' \cup \mathbb{R}$ and define an order $\leq_{\mathrm D}$ on $\mathrm D$ in the following way.
\[
    x,y \in \mathrm D, \text{ then } x \leq_{\mathrm D} y :\Leftrightarrow 
        \begin{cases}
            x \in \mathrm R'          \text{ and } y \in \mathbb{R}
        \\ x,y \in \mathrm R'         \text{ and } x \preceq y
        \\ x,y \in \mathbb{R} \text{ and } x \leq y
        \end{cases}
\]
This order enables us to depict the set $\mathrm D$ as linearly ordered from left to right.  
Moreover we can consider $\mathrm R'$ as the \emph{left half} of $\mathrm D$ 
while considering $\mathbb{R}$ as the \emph{right half} of $\mathrm D$. 
To emphasize this we consider the map
\[
    \iota : \mathbb{R} \rightarrow \mathrm R', \quad \iota(x) = x
\]
which embeds $\mathbb{R}$ into $\mathrm R'$.  
Then 
\[
    \cdots \leq_{\mathrm D} \iota(1) 
           \leq_{\mathrm D} \cdots 
           \leq_{\mathrm D} \iota(0) 
           \leq_{\mathrm D} \cdots 
           \leq_{\mathrm D} \iota(-1) 
           \leq_{\mathrm D} \cdots  
           \leq_{\mathrm D} -1 
           \leq_{\mathrm D} \cdots  
           \leq_{\mathrm D} 0
           \leq_{\mathrm D} \cdots  
           \leq_{\mathrm D} 1 
           \leq_{\mathrm D} \cdots,  
\]
thus we may consider $\mathrm R'$ as $\mathbb{R}$ reflected in the origin. 

We will now construct \emph{barriers} in 
$\mathrm D \times \mathbb{R} = \left(\mathrm{R}' \times \mathbb{R} \right) \cup \left(\mathbb{R} \times \mathbb{R} \right)$. 
Note that in the proof of Theorem \ref{thm:Perkins} the vh-barrier $R$ was established as $R = R_1 \cup R_2$ where $R_1$ was induced by $r_1$, the endpoints of paths being stopped at a new running maximum while 
$R_2$ was induced by $r_2$, the endpoints of paths being stopped at a new running minimum.
While $R$ is a barrier in $\mathbb{R}^2$ we can also use $r_1$ and $r_2$ to define a corresponding barrier $R_{\mathrm D} \seq \mathrm D \times \mathbb{R}$ in the following way.
For every $(x,y) \in r_1$ we have $(x,y) \in \mathrm{R}' \times \mathbb{R} \seq \mathrm D \times \mathbb{R}$ and also 
$(\tilde x, y) \in R_{\mathrm D}$ for all $\tilde x \in \mathrm{D}$ such that $\tilde x \leq_{\mathrm{D}} x$.
Note that as $x \in \mathrm{R}'$ of course we have $\tilde x \in \mathrm{R}'$ for all $\tilde x \leq_{\mathrm{D}} x$. 
Analogously for $(x,y) \in r_2$ we have $(x,y) \in \mathbb{R} \times \mathbb{R} \seq \mathrm D \times \mathbb{R}$ and also 
$(\tilde x, y) \in R_{\mathrm D}$ for all $\tilde x \in \mathrm{D}$ such that $\tilde x \leq_{\mathrm{D}} x$. 
Note that this especially implies $(\tilde x, y) \in R_D$ for all $\tilde x \in \mathrm{R}'$.

Thus every path stopped at a new running maximum creates a line in the left half of $\mathrm D \times \mathbb{R}$ 
while every path stopped at new running minimum creates a line in the right half of $\mathrm D \times \mathbb{R}$ 
which continues on into the left half creating a structure of Rost barrier type.
Figure \ref{fig:Perkins-Loynes} gives an illustration of how the vh-barrier picture in Figure \ref{fig:Perkins} (B) translates into a barrier in 
$\mathrm D \times \mathbb{R}$ which will be of inverse barrier structure. 
\begin{figure}
\centering
\includegraphics[width = 1\linewidth]{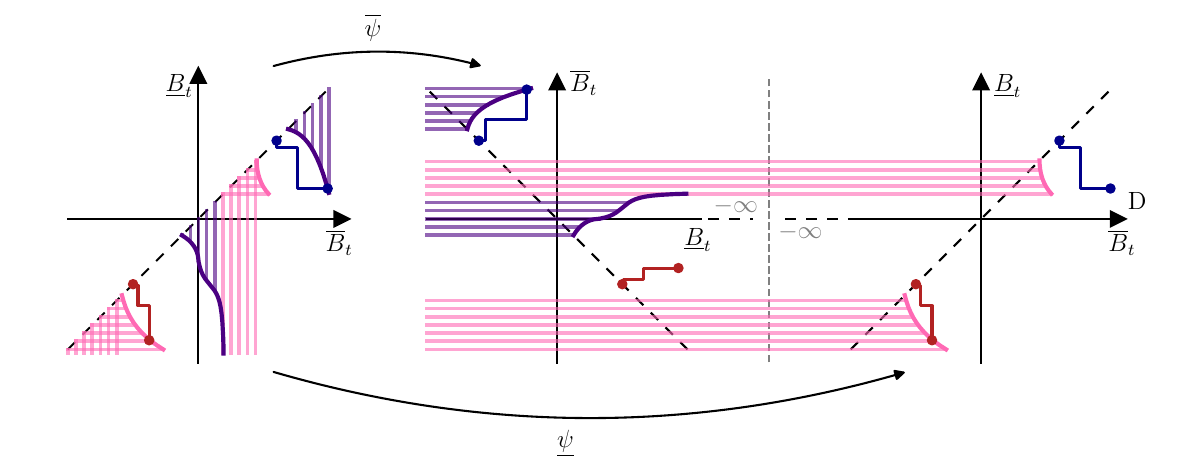}
\caption{Translating the Perkins barrier picture in Figure \ref{fig:Perkins} (B) into $\mathrm D \times \mathbb{R}$.}
\label{fig:Perkins-Loynes}
\end{figure}

We want to consider the path $(\overline B, \underline B)$ in $\mathrm D \times \mathbb{R}$, 
more precisely we want to consider the path on the lhs $\mathrm{R}' \times \mathbb{R}$ as well as on the rhs $\mathbb{R} \times \mathbb{R}$
of $\mathrm D \times \mathbb{R}$ separately but simultaneously.
So we define the following two maps embedding $\mathbb{R}^2$ into the respective space.
\begin{align*}
    &\overline \psi: \mathbb{R}^2 \rightarrow \mathrm{R}' \times \mathbb{R},\quad \overline \psi(x,y) = (\iota(y),x)
\\  &\underline \psi: \mathbb{R}^2 \rightarrow \mathbb{R} \times \mathbb{R},\quad \underline \psi(x,y) = (x,y)
\end{align*}
While $\underline \psi$ will plainly embed the path $(\overline B, \underline B)$ into the right half $\mathbb{R} \times \mathbb{R}$, 
the map $\overline \psi$ will reflect the path $(\overline B, \underline B)$ along the main diagonal before embedding it into the left half $\mathrm{R}' \times \mathbb{R}$. 
As we have `flipped' the order on $\mathrm{R}'$ we will perceive the action of $\overline \psi$ as a counter clockwise rotation by $90^{\circ}$. 
Due to the order defined on $D$ these two paths will now both travel from left to right. 
In the left half the path $\overline\psi\left(\left(\overline B_{t}, \underline B_{t}\right)\right)$ will move vertically upwards whenever a new running maximum is reached and will move only horizontally when a new running minimum is reached. 
Furthermore this implies that the barrier can only be hit when a new running maximum is reached and no stopping will happen in the left hand side due to a new running minimum. 
In the right half the path $\underline\psi\left(\left(\overline B_{t}, \underline B_{t}\right)\right)$ will move vertically downwards whenever a new running minimum is reached but will travel perfectly horizontal at the level of the current running minimum when a new running maximum is reached.
In other words, all stopping due to new running maxima will happen on the left hand side $\mathrm{R}' \times \mathbb{R}$ while all stopping due to new running minima will happen on the right hand side $\mathbb{R} \times \mathbb{R}$.
We consider the respective stopping times of the embedded processes
\begin{align*}
    \overline \tau_R  &:= \inf \left\{t \geq 0 : \overline\psi\left(\left(\overline B_{t}, \underline B_{t}\right)\right) \in R_{\mathrm D} \right\},
\\  \underline \tau_R &:= \inf \left\{t \geq 0 : \underline\psi\left(\left(\overline B_{t}, \underline B_{t}\right)\right) \in R_{\mathrm D} \right\}.
\end{align*}
Now if $R \subseteq \mathbb{R}^2$ and $R_{\mathrm D} \subseteq \mathrm{D} \times \mathbb{R}$ are both induced by the same sets $r_1$ and $r_2$ found in Theorem \ref{thm:Perkins}, then 
\[
    \tau_R = \inf \left\{t \geq 0 : \left(\overline B_{t}, \underline B_{t}\right) \in R \right\} 
           = \overline \tau_R \wedge \underline \tau_R,
\]
thus both representations of the hitting time of the barrier will be equivalent. 
We will assume $R_{\mathrm D}$ to be finely closed with respect to $\overline\psi\left(\left(\overline B_{t}, \underline B_{t}\right)\right)$ as well as $\underline\psi\left(\left(\overline B_{t}, \underline B_{t}\right)\right)$ and give a Loynes type uniqueness argument for the stopping time $\tau_R$.
\begin{thm} \label{thm:advancedLoynes}
Let $R, S \subseteq \mathbb{R}^2$ be two vh-barriers and let $\tau_R$ resp. $\tau_S$ denote their hitting times by the process $\left(\overline B_{t}, \underline B_{t}\right)$. 
If $\tau_R$ and $\tau_S$ both embed the same law $\mu$, then $\tau_R = \tau_S$ a.s. 
\end{thm}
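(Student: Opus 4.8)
The plan is to mimic the Loynes-type argument from Proposition~\ref{prp} and Corollary~\ref{cor}, but carried out in the enlarged phase space $\mathrm D \times \mathbb{R}$, where the vh-barriers $R$ and $S$ become honest inverse (Rost) barriers $R_{\mathrm D}$ and $S_{\mathrm D}$. The key observation, established in the discussion preceding the theorem, is that $\tau_R = \overline\tau_R \wedge \underline\tau_R$, where $\overline\tau_R$ is the hitting time of $R_{\mathrm D}$ by the rotated process $\overline\psi((\overline B,\underline B))$ living on the left half $\mathrm{R}' \times \mathbb{R}$, and $\underline\tau_R$ is the hitting time by $\underline\psi((\overline B,\underline B))$ on the right half $\mathbb{R}\times\mathbb{R}$. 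Since all stopping at a new running maximum happens on the left half and all stopping at a new running minimum on the right half, and the two embedded processes are jointly Markov (each being a nice functional of $(\overline B,\underline B)$ satisfying a Blumenthal--Getoor $0$-$1$ law), the finely-closed barrier $R_{\mathrm D}$ behaves exactly like a Rost inverse barrier with respect to each embedded process.

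First I would set up the ``minimality'' reduction exactly as in Corollary~\ref{cor}: any solution $\tau$ to $(\mathrm{SEP}_{\lambda,\mu})$ satisfies $\E[\tau] = \E[B_\tau^2] = \int x^2 \, d\mu$, hence if $\tilde\tau \le \tau$ is another solution then $\E[\tilde\tau] = \E[\tau]$ forces $\tilde\tau = \tau$ a.s.; so it suffices to show $\tau_R \wedge \tau_S$ is again a solution embedding $\mu$, and then minimality gives $\tau_R \wedge \tau_S = \tau_R = \tau_S$ a.s. Next I would prove the analogue of Proposition~\ref{prp} for the hitting time of $R_{\mathrm D} \cup S_{\mathrm D}$. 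One does this on the left half and the right half separately. On the left half $\mathrm{R}'\times\mathbb{R}$, $R_{\mathrm D}$ and $S_{\mathrm D}$ restrict to ordinary Rost inverse barriers for the process $\overline\psi((\overline B,\underline B))$; since $\overline B_{\overline\tau_R} \sim$ the running-maximum law induced by $\mu$ through the vh-structure --- more to the point, since $\tau_R$ and $\tau_S$ embed the \emph{same} terminal law $\mu$ and stop at new extrema, the law of $(\overline B, \underline B)$ at the stopping time is determined by $\mu$ --- we can run the Loynes comparison: define the set $K$ of ``barrier levels'' where $R_{\mathrm D}$ is strictly inside $S_{\mathrm D}$, show that on the event $\{$the $S$-path stops at a level in $K\}$ the $R$-path has already stopped at the same level, conclude $\tau_{R_{\mathrm D}} \wedge \tau_{S_{\mathrm D}}$ embeds the same law and equals $\tau_{R_{\mathrm D}} \wedge \tau_{S_{\mathrm D}}$ pathwise. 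Combining left and right halves via $\tau_R = \overline\tau_R \wedge \underline\tau_R$, one obtains that $\tau_{R\cup S} := \tau_R \wedge \tau_S$ embeds $\mu$.

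I expect the main obstacle to be the bookkeeping at the interface between the two halves of $\mathrm D \times \mathbb{R}$, and in particular verifying that the embedded processes $\overline\psi((\overline B,\underline B))$ and $\underline\psi((\overline B,\underline B))$ are jointly Markov with the Blumenthal--Getoor $0$-$1$ law so that ``we are finely inside the barrier when we stop'' --- the precise point on which Loynes' argument rests. The subtlety is that a single Brownian path contributes stopping behavior to \emph{both} halves: it may approach a barrier line on the right (a new running minimum) only to later be stopped on the left (a new running maximum), so the two hitting times $\overline\tau_R$, $\underline\tau_R$ are not independent and one must argue that the \emph{minimum} of the two still respects the inverse-barrier monotonicity. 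The cleanest route is probably to observe that $(\overline B_t, \underline B_t)$ together with the flag ``currently at a running max / running min'' forms a strong Markov process on $H_D$, that $R_{\mathrm D}$ pulled back to $H_D$ under $\overline\psi$ and $\underline\psi$ yields a set that is finely closed for this combined process, and then apply the one-barrier identity $\tau_R = \tau_{R^*}$ together with the level-set comparison $K$ on each half. Once this Markov/fine-closure preamble is in place, the Loynes comparison and the minimality argument are routine and essentially identical to Proposition~\ref{prp} and Corollary~\ref{cor}. Finally, the shared-mass case $\lambda \wedge \mu \neq 0$ is handled as in Theorem~\ref{thm:Perkins}: the mass stopped at time $0$ is pinned down to be exactly $(\lambda\wedge\mu)(A)$ for both $\tau_R$ and $\tau_S$, so it does not affect the comparison on $\{\tau_R > 0\} \cap \{\tau_S > 0\}$.
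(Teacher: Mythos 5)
Your proposal is correct and follows essentially the same route as the paper: pass to the enlarged phase space $\mathrm D \times \mathbb{R}$ where $R$ and $S$ become inverse barriers with $\tau_R = \overline\tau_R \wedge \underline\tau_R$, define the level set $K$ where one barrier extends past the other, use the fact that stopping only occurs at a new running extremum to get the one-sided comparison $\tau_R \le \tau_S$ on $\{B_{\tau_S}\in K\}$ and hence $P[B_{\tau_S}\in K,\, B_{\tau_R}\in K^c]=0$, and conclude via the marginal-equality and minimality arguments of Proposition~\ref{prp} and Corollary~\ref{cor}. The subtlety you flag (one path feeding both halves of $\mathrm D\times\mathbb{R}$) is exactly the point the paper resolves by the case split ``stopped at a new running maximum vs.\ a new running minimum,'' which is the resolution you sketch.
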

\begin{proof}
Note that by the above discussion we must have inverse barriers $R_{\mathrm D}, S_{\mathrm D} \subseteq \mathrm{D} \times \mathbb{R}$ corresponding to the vh-barriers $R$ resp.\ $S$ 
such that $\tau_R = \overline \tau_R \wedge \underline \tau_R$ and $\tau_S = \overline \tau_S \wedge \underline \tau_S$. 
Analogously to Proposition \ref{prp} and Corollary \ref{cor} the almost sure equality $\tau_R = \tau_S$ will follow by defining a set 
$K \subseteq \mathbb{R}$ of those levels where barrier $R$ is `longer' than barrier $S$ and showing that 
\begin{equation} \label{eq:P-K}
    P\left[ B_{\overline \tau_S \wedge \underline \tau_S} \in K, B_{\overline \tau_R \wedge \underline \tau_R} \in K^{c}\right] = 0.
\end{equation}
Considering the order $\leq_{\mathrm{D}}$ on $\mathrm{D}$ 
as well as the fact that $R_{\mathrm{D}}$ respectively $S_{\mathrm{D}}$ are both inverse barriers we define
\[
    K:= \left\{z \in \mathbb{R} : \sup\left\{ x \in D: (x,z) \in R_{\mathrm{D}} \right\} >_{\mathrm{D}} \inf\left\{x \in D : (x,z) \in S_{\mathrm{D}} \right\}\right\}.
\]
Thus now we can consider the subset
\begin{align*}
    R^K := \left\{(x,y) \in R_{\mathrm{D}}: y \in K\right\} \subseteq R_{\mathrm{D}}
\end{align*}
and define $R^{K^c}, S^K$ and $S^{K^c}$ analogously.
Then we have $S^K \subseteq R^K$ and $R^{K^c} \subseteq S^{K^c}$.

Consider $B_{\overline \tau_S \wedge \underline \tau_S} \in K$, 
then it is crucial to observe that our stopping times will only stop  Brownian motion at a \emph{new} running minimum or running maximum, 
hence by definition of the respective stopping times we will either have 
$B_{\overline \tau_S \wedge \underline \tau_S} = \overline B_{\overline \tau_S}$ or 
$B_{\overline \tau_S \wedge \underline \tau_S} = \underline B_{\underline \tau_S}$, 
thus either 
\begin{align*}
    \overline\psi\left(\left(\overline B_{t}, \underline B_{t}\right)\right)  \in S^K \subseteq R^K 
    \quad  \text{ or } \quad
    \underline\psi\left(\left(\overline B_{t}, \underline B_{t}\right)\right) \in S^K \subseteq R^K.
\end{align*}
Assume $B_{\overline \tau_S \wedge \underline \tau_S} = \overline B_{\overline \tau_S}$. 
Then $\overline \tau_S \leq \underline \tau_S$ and $\overline \tau_R \leq \overline \tau_S$ by the definition of $K$, 
hence $\overline \tau_R \wedge \underline \tau_R \leq \overline \tau_S \wedge \underline \tau_S$ and it is impossible to have
$B_{\overline \tau_R \wedge \underline \tau_R} \in K^{c}$ as otherwise $B_{\overline \tau_S \wedge \underline \tau_S}$ would have stopped in $K^{c}$ as well. 
We have the analogous result when $B_{\overline \tau_S \wedge \underline \tau_S} = \underline B_{\underline \tau_S}$, 
hence \eqref{eq:P-K} is clear.
We can conclude the proof analogously to the proof of Proposition \ref{prp} and Corollary \ref{cor}.
\end{proof}

We can now complete the proof of Theorem \ref{thm:maintheorem}.
\begin{proof}
For an arbitrary choice of $\varphi$ consider $\tau_P:= \tilde \tau$ as found in Theorem \ref{thm:Perkins}. 
Let $\overline \tau$ be another stopping time constructed as in Theorem \ref{thm:Perkins} but for a different choice of $\varphi$. 

First note that the behaviour in $0$ is independent of $\varphi$, 
hence for $A \in \mathcal B (\mathbb{R})$ we have
\[
    \P \left[ \tau_{P} = 0, B_0 \in A \right] = (\lambda \wedge \mu) (A) = \P\left[ \overline \tau = 0, B_0 \in A \right].
\]
Now note that on $\{\tau_{P} > 0 \} = \{\overline \tau > 0 \}$ both stopping times are given as hitting times of vh-barriers and embed the same law $\mu$, 
hence by Theorem \ref{thm:advancedLoynes} we have $\tau_{P} = \overline \tau$ a.s. 
Since the choices for $\varphi$ were arbitrary we must have optimality for any such $\varphi$, thus we can conclude that $\tau_R$ minimizes resp.\  maximizes $\overline B_{\tilde\tau}$ resp.\  $\underline B_{\tilde\tau}$ in law.
\end{proof}
%
%
%
%
%
%
\subsection{An Example}
We will illustrate the occurrences of the different type of lines of the Perkins barrier solution in the following simple atomic example. 
Consider the initial measure
\[
\lambda = \frac{1}{4} \delta_{-1} + \frac{1}{2} \delta_{0} + \frac{1}{4} \delta_{1},
\]
and for $\alpha \in [0,1]$ the terminal measure
\[
\mu = \frac{1-\alpha}{2} \delta_{-2} + \alpha \delta_{0} + \frac{1-\alpha}{2}  \delta_{2}.
\]
Depending on the size $\alpha$ of the atom in $\{0\}$ we have the following different cases.
\begin{itemize}
\item[{$\alpha \in \left[0,\frac{1}{2}\right]$:}] 
The measure $\lambda$ and $\mu$ share mass $\mu(\{0\})=\alpha \leq \frac{1}{2} = \lambda(\{0\})$ in $\{0\}$, 
thus $(\lambda \wedge \mu)(\{0\}) = \alpha$ and all mass needed in $\{0\}$ will get stopped immediately while all other mass will be left to run to $\pm2$ as illustrated in Figure \ref{fig:example-1}.
\item[{$\alpha \in \left(\frac{1}{2},\frac{5}{8}\right]$:}] 
Since $\mu(\{0\}) = \alpha > \frac{1}{2} = \lambda(\{0\})$ we have $(\lambda \wedge \mu)(\{0\}) = \frac{1}{2}$. 
Hence all mass available in $\{0\}$ will be stopped immediately but then still a little more mass is needed. 
An additional v-line in $0$ where some of the paths starting in $B_0 = -1$ will stop upon reaching a new running maximum in $0$ will provide this missing mass as illustrated in Figure \ref{fig:example-2}.
Note that by adding this additional v-line we can at maximum acquire an additional $\frac{1}{8}$ of mass in $\{0\}$, 
hence resulting in a maximal atom size of $\frac{5}{8}$ to be embedded this way.
\item[{$\alpha \in \left(\frac{5}{8},\frac{3}{4}\right]$:}] 
As seen in the previous case we have $(\lambda \wedge \mu)(\{0\}) = \frac{1}{2}$ and all mass available in $\{0\}$ will be stopped immediately. 
However as discussed above merely adding a v-line will no longer suffice to acquire the $\alpha > \frac{5}{8}$ atom in $\{0\}$. 
Hence we also need to stop (some) paths starting in $B_0 = 1$ upon reaching a new running minimum in $\{0\}$ via an h-line in $\{0\}$ as illustrated in Figure \ref{fig:example-3}.
Note that by adding this horizontal line we can at most acquire an additional $\frac{1}{8}$ of mass in $\{0\}$ from the paths started in $B_0 = 1$, 
adding up to a total atom size of $\frac{3}{4}$ that could be embedded this way.
\item[{$\alpha \in \left(\frac{3}{4}, 1\right]$:}] The measures $\lambda$ fails to be prior to $\mu$ in the convex order when $\alpha > \frac{3}{4}$. 
While technically we could provide vh-barriers such that the corresponding hitting time would embed the measure $\mu$, 
the resulting stopping time would no longer be integrable hence not be a solution to (SEP$_{\lambda, \mu}$).
\end{itemize}
\begin{figure}
\centering
\includegraphics[width = 1\linewidth]{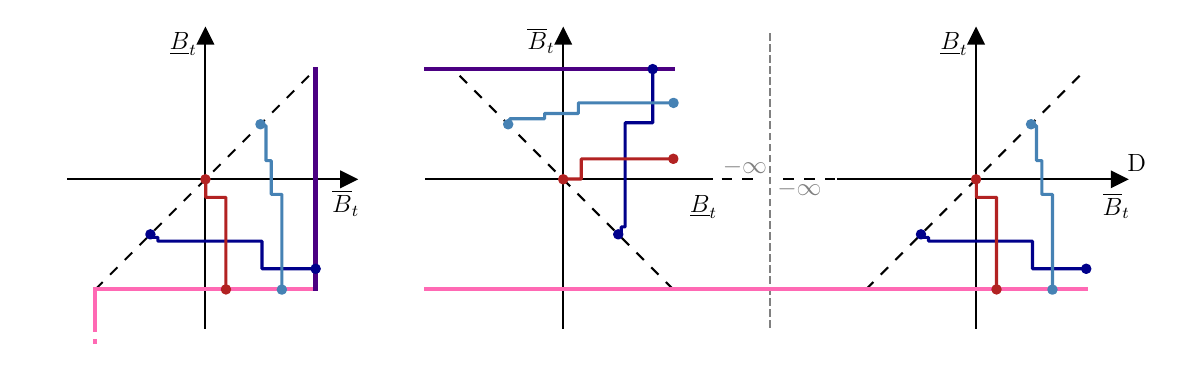}
\caption{The Perkins embedding of $\mu$ for $\alpha \in \left[0,\frac{1}{2}\right]$.}
\label{fig:example-1}
\end{figure}
\begin{figure}
\centering
\includegraphics[width = 1\linewidth]{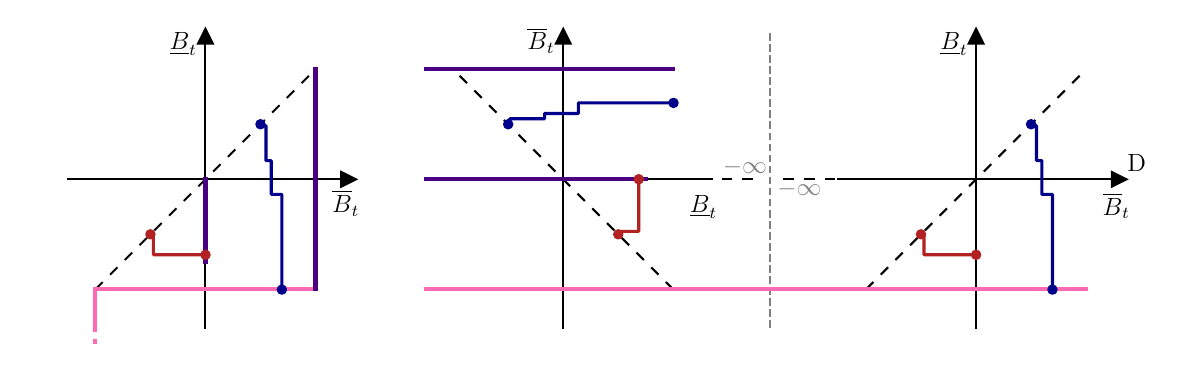}
\caption{The Perkins embedding of $\mu$ for $\alpha \in \left(\frac{1}{2},\frac{5}{8}\right]$.}
\label{fig:example-2}
\end{figure}
\begin{figure}
\centering
\includegraphics[width = 1\linewidth]{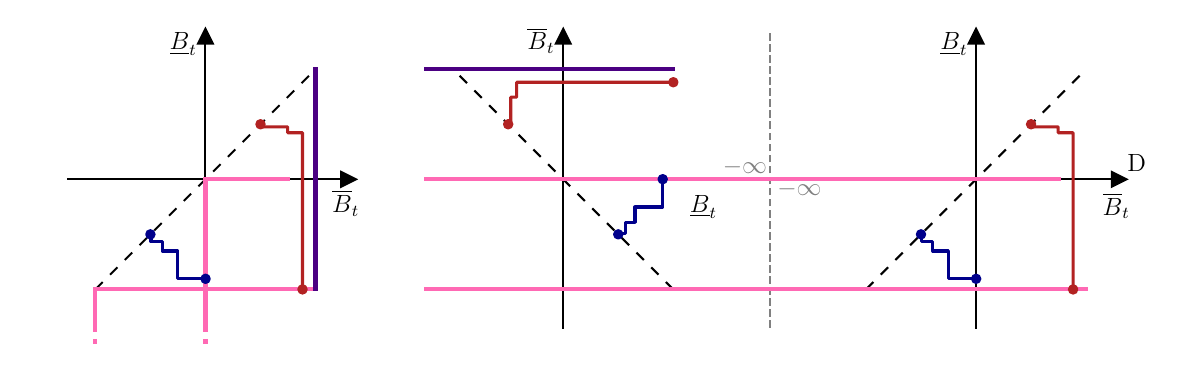}
\caption{The Perkins embedding of $\mu$ for $\alpha \in \left(\frac{5}{8},\frac{3}{4}\right]$.}
\label{fig:example-3}
\end{figure}
%
%
%
%
%
%
%
%
%
%
\bibliographystyle{plain}
\bibliography{joint_biblio}
\end{document}